\DeclarePairedDelimiter{\abs}{\lvert}{\rvert}
\definecolor{Gray}{gray}{0.975}
\newtheorem{theor}{Theorem}[section]
\newtheorem{df}[theor]{Definition}
\newtheorem{prop}[theor]{Proposition}
\newtheorem{lem}[theor]{Lemma}
\newtheorem{problem}{Problem}
\newtheorem{rem}[theor]{Remark}
\newcommand{\Z}{\mathds{Z}}
\newcommand{\Q}{\mathds{Q}}
\newcommand{\DP}{\mathds{D}\mathds{P}}
\newcommand{\CP}{\mathds{D}\mathds{P}}
\newcommand{\s}{\mathcal{S}}
\newcommand{\de}{\partial}
\newcommand{\dd}{\mathrm{D}}
\newcommand{\D}{\mathds{D}}
\renewcommand{\epsilon}{\varepsilon}
\newcommand{\R}{\mathds{R}}
\newcommand{\N}{\mathds{N}}
\newcommand{\e}{\varepsilon}
\newcommand{\K }{K\"{a}hler }
\newcommand{\KE }{K\"{a}hler-Einstein }
\newcommand{\pk }{para-K\"{a}hler }
\begin{document}
\title[Toric para-K\"{a}hler-Einstein manifolds immersed in space forms]{Toric para-K\"{a}hler-Einstein manifolds immersed in para-K\"{a}hler space forms}

\author{Gianni Manno}
\address{(G. Manno) Dipartimento di Scienze Matematiche ``G. L. Lagrange'', Politecnico di Torino\\Corso Duca degli Abruzzi 24, 10129 Torino}
\email{giovanni.manno@polito.it}

\author{Filippo Salis}
\address{(F. Salis) Dipartimento di Scienze Matematiche ``G. L. Lagrange'', Politecnico di Torino\\Corso Duca degli Abruzzi 24, 10129 Torino }
\email{filippo.salis@polito.it}

\thanks{The first author gratefully acknowledges support by the project ``Finanziamento alla Ricerca'' under the contract numbers 53\_RBA21MANGIO, and by the PRIN project 2022 “Real and Complex Manifolds: Geometry and Holomorphic Dynamics” (code 2022AP8HZ9). 
The second author is supported by the ``Starting Grant'' under the contact number 53\_RSG22SALFIL. Both authors are members of the GNSAGA of the INdAM}

\subjclass[2010]{53C15; 53C42; 32Q20; 35J96; 35C11}
\keywords{Para-K\"ahler-Einstein manifolds; Para-K\"ahler immersions; Toric \pk manifolds; Para-K\"ahler space forms; Calabi's diastasis function.}


\begin{abstract}
%
A classical and long-staying problem addressed, among others, by Calabi and Chern, is that to find a complete list of mutually non-isometric \KE manifolds immersed in a finite-dimensional \K  
space form. We address the same problem in the \pk context and, then, we find a list of mutually non-isometric toric \pk manifolds analytically immersed in a finite-dimensional \pk space form.
\end{abstract}

\maketitle


\section{Introduction}


\subsection{Description of the problem and main results}
An \emph{almost para-complex manifold} is a $2n$-dimensional manifold $M$ provided with a field of endomorphisms $\mathcal{T}$ such that $\mathcal{T}^2=1$, having eigenvalues $1$ and $-1$, whose associated eigendistributions are $n$-dimensional. An almost para-complex manifold whose the aforementioned distributions are integrable, is called a \emph{para-complex manifold}.  
A \emph{\pk manifold} is a para-complex manifold endowed with a symplectic form $\omega$ such that $g=\omega\left(\mathcal{T}(\cdot),\,\cdot\,\right)$ is a pseudo-Riemannian metric. A \pk manifold having constant para-holomorphic sectional curvature is said \emph{\pk space form}: if the curvature is zero then it is called \emph{flat} otherwise \emph{non-flat}.


The formal analogy with the K\"ahler geometry makes it possible to state problems, originally formulated in the \K context, also in the \pk case. For instance, a classical  problem in K\"ahler geometry is the characterization of holomorphic and isometric immersions  into \K space forms, i.e., into   K\"ahler  manifolds of constant holomorphic sectional curvature (see \cite{Cal} and for a modern introduction to this subject \cite{loizedda}). This problem can be stated in a unified way, including also the \pk case, as follows:


%

\begin{center}
\textbf{General problem:}

\emph{To classify (para-)\K manifolds that can be (para-)\K immersed into (para-)\K space forms.}
\end{center}

\medskip
\noindent
In the \K case, despite E. Calabi found in \cite{Cal} some criteria that allow, at least from a theoretical viewpoint, to treat  the above problem, a satisfactory classification is far to be obtained as the problem remains too underdetermined.
In fact, even in special cases of great interest, such as \emph{\KE} manifolds (see e.g. \cite{extr,pacific,loizedda, ms2,salis} for more details), a complete classification is still unknown.
More precisely, one can ask to find \KE manifolds that can be \K immersed into \K space forms.
This problem is indeed still open only when the ambient space has positive holomorphic sectional curvature: in this case, the \KE manifolds are called \emph{projectively induced}. Even if one restricts to the class of \emph{toric} projectively induced \KE manifolds, the problem of their characterization is only partially solved, see e.g. \cite{loitoric, ny, ms2}.


In the \pk case, as in the \K setting, the above general problem is actually very challenging. The recent paper \cite{MSpK}, where necessary and sufficient conditions for the existence of \pk  immersions in \pk space forms have been found, is a first step in addressing the issue. 
In fact, one of the main difficulties one meets in facing the aforementioned problem, is that in the para-complex context, unlike the complex one, para-holomorphic functions are not, in general, analytic but only $\mathcal{C}^\infty$-smooth. 

Therefore, we are going to focus our attention to the case in which the \pk immersions are analytic and the \pk metrics admit symmetries similar to the toric ones. More precisely, in this paper, we will study the following problem:

\begin{problem}\label{toricprob}
Classify toric para-\KE manifolds 
admitting an analytic \pk  immersion into a \pk space form.
\end{problem}
Another disadvantage in the \pk context, unlike the \K one,  is the absence of a notion of Bochner's coordinates, which makes the study of Problem \ref{toricprob} more involved. We overcome such problem by a case by case analysis.

\medskip
Our main results are contained in the following theorems. Theorem \ref{th.main.1} concerns Ricci-flat \pk manifolds, that turn out to be the only para-\KE manifold embeddable into a flat \pk space form, whereas Theorem \ref{th.main.2} concerns the embeddability of toric non-flat \pk manifolds into non-flat  \pk space forms.
\begin{theor}\label{th.main.1}
Flat \pk space forms are the only toric Ricci-flat \pk manifolds that can be analytically \pk immersed into a \pk space form. In particular, they can be \pk immersed only into another flat \pk space form.
Moreover, toric Ricci-flat \pk manifolds are the only para-\KE manifolds that can be analytically \pk immersed into a flat \pk space form.
\end{theor}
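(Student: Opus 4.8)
The plan is to reduce everything to an explicit local normal form for toric para-Kähler metrics, analogous to the symplectic/action-angle picture in the Kähler toric case. I would start by writing the para-Kähler potential (the analogue of the Guillemin/symplectic potential) in suitable toric coordinates $(x,\theta)$, so that the metric and its diastasis-type function are governed by a single convex function of the momentum variables $x$. In these coordinates the Ricci-flat (para-Kähler-Einstein with zero Einstein constant) condition becomes a real Monge–Ampère equation on the potential, of the form $\det(\partial^2 \Phi/\partial x_i\partial x_j)=\text{const}$, and the toric symmetry forces the potential to separate, so one obtains an ODE description amenable to explicit integration.

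\medskip
The heart of the argument is the criterion from \cite{MSpK}: an analytic para-Kähler immersion into a flat para-Kähler space form exists if and only if the (para-)diastasis function $D$ admits a certain global expansion with nonnegative-type coefficients, i.e. the associated infinite matrix of Taylor coefficients of $e^{D}$ (or of $D$ itself, in the flat case) is of the correct signature. So the next step is to compute, for a toric Ricci-flat para-Kähler metric, the Calabi diastasis function explicitly in terms of the separated potential found above, and then to test this positivity/embeddability criterion. The expectation is that the Monge–Ampère equation is rigid enough that, after the toric reduction, the only potentials producing a diastasis with admissible coefficients are the flat ones — this is what forces the manifold to be a flat para-Kähler space form and pins down that the only possible ambient is again flat.

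\medskip
Concretely I would organize the proof in the following steps. First, set up toric coordinates and the separated form of the para-Kähler potential $\Phi(x)=\sum_i \phi_i(x_i)$ dictated by the torus symmetry. Second, impose Ricci-flatness: the determinantal Monge–Ampère equation collapses, under separation, to a product/constant condition on the $\phi_i''$, which I integrate to get each $\phi_i$ in closed form up to affine and scaling freedom. Third, compute the diastasis $D$ from $\Phi$ via the standard Calabi formula (difference of the potential and its ``double'' evaluated at diagonal points), obtaining $D$ as an explicit analytic function. Fourth, apply the embeddability criterion of \cite{MSpK} to $D$ (resp. to $e^{\pm D}$ according to the sign of ambient curvature) and analyze when the coefficient matrix has the required rank/signature; show admissibility fails unless each $\phi_i$ is the flat quadratic, whence $M$ is a flat para-Kähler space form and the ambient must be flat. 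Finally, for the converse and for the last sentence of the theorem, verify directly that flat para-Kähler space forms embed in a flat ambient, and that any para-Kähler-Einstein manifold embeddable in a flat space form must be Ricci-flat — the latter because a flat ambient has vanishing curvature, so the Gauss–Codazzi relation from \cite{MSpK} forces the Einstein constant of $M$ to vanish.

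\medskip
The main obstacle, I expect, is the fourth step: the absence of Bochner-type coordinates in the para-Kähler world, emphasized in the introduction, means the positivity/signature analysis of the diastasis coefficients cannot be handled by a single clean structural theorem and must instead be carried out by a genuine case-by-case study of the separated building blocks $\phi_i$. Controlling the mixed para-holomorphic Taylor expansion of $D$ and ruling out all non-flat candidates — rather than just one normal form — is where the real work lies; the Monge–Ampère reduction and the diastasis computation in the earlier steps should be comparatively routine once the toric normal form is fixed.
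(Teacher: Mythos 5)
Your outline diverges from the paper's actual argument and contains two genuine gaps. First, the claim that ``the toric symmetry forces the potential to separate'' as $\Phi(x)=\sum_i\phi_i(x_i)$ is false: toric invariance only says that the potential is a function of the $n$ products $x_i=\xi_i\eta_i$, not a sum of single-variable functions (compare the potential $\log\bigl(1+\tfrac{x_1}{3}+\tfrac{x_2}{3}\bigr)$ on $\DP^2$, which is toric but not separated). Building the whole classification on an ODE reduction obtained from this separation would therefore miss the general case; the real Monge--Amp\`ere equation in the momentum variables stays a genuine PDE in $n$ variables.

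Second, and more importantly, the engine of the paper's proof is missing from your plan. The paper does not use a positivity/signature test on the Taylor coefficients of the diastasis --- in the para-K\"ahler setting the decomposition $\dd_0=4\sum u_i(\xi)v_i(\eta)$ (resp.\ the logarithmic analogue) from \cite{MSpK} involves arbitrary smooth $u_i,v_i$ with no definiteness constraint, so there is no Calabi-type ``resolvability'' criterion to test. Instead, the key step is Lemma \ref{lem.polyn}: analyticity of the immersion together with toric invariance forces $u_i,v_i$, hence the potential, to be \emph{polynomials} in the $x_i$ (or $\tfrac{8}{c}\log$ of a polynomial in the non-flat case). Once polynomiality is in hand, all three assertions of the theorem follow from elementary degree comparison in the Monge--Amp\`ere equation \eqref{lambda1}: a polynomial left-hand side cannot equal $e^{-\lambda\dd_0/2}$ unless $\lambda=0$; a logarithmic ansatz with $\lambda=0$ produces the impossible inequality $(n+1)d-n\geq (n+1)d$; and the $\lambda=0$ polynomial case forces $\deg P=1$, i.e.\ flatness. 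Your substitute for the last sentence of the theorem --- that Gauss--Codazzi in a flat ambient forces the Einstein constant to vanish --- is also not immediate: the Gauss equation expresses the submanifold's curvature through the second fundamental form, which need not vanish, and in the pseudo-Riemannian setting the usual sign arguments are unavailable, so this step would need a genuine proof rather than an appeal to flatness of the ambient.
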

Let $\DP^N$ be projective space constructed on the algebra of para-complex number $\D$ (cf. Section \ref{sec.pk.diastasis}), endowed with the \pk counterpart of the Fubini-Study metric, namely the
\pk metric $g_{pFS}^N$ admitting
$$ \log\left(|Z_0|^2+\mathellipsis+|Z_{N}|^2 \right)$$
as \pk potential, where $(Z_0,\mathellipsis,Z_{N})$ are homogeneous coordinates and $|\cdot|^2$ denotes the para-complex modulus. 
\begin{theor}\label{th.main.2}
The toric para-\KE manifolds
$\CP^{n_1}\times\mathellipsis\times\CP^{n_k}$ with the \pk metric 
$$ \frac{K}{h}\ \bigoplus_{i=1}^k \ (n_i+1)\, g_{pFS}^{n_i},$$
where $h$ denotes the greater common divisor between $\{n_1+1,\mathellipsis,n_k+1\}$, can be analytically \pk immersed for any $K\in\Z^+$ into
$\left(\DP^N\, , \, g_{pFS}^N\right)$ with $N$ large enough.

\noindent
If the dimension $\sum_{i=1}^k n_i$ is less or equal to $2$, they are the only ones.
\end{theor}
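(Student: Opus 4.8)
The plan is to treat the two assertions separately, using throughout the \pk analogue of Calabi's criterion established in \cite{MSpK}: a real-analytic \pk manifold with \pk potential (diastasis) $D$ admits an analytic \pk immersion into $\left(\DP^N,g_{pFS}^N\right)$ precisely when $e^{D}-1$ can be written as a \emph{finite} sum $\sum_{j=1}^{N}|f_j|^2$ of para-complex squared moduli of para-holomorphic functions $f_j$; these $f_j$, together with the constant $1$, are then the affine components of the immersion.

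For the existence statement I would exhibit the immersion explicitly. In affine toric coordinates the diastasis of the stated metric is
$$ D=\frac{K}{h}\sum_{i=1}^{k}(n_i+1)\,\log\!\left(1+|z^{(i)}|^2\right), $$
where $|z^{(i)}|^2$ denotes the para-complex squared modulus of the coordinates of the $i$-th factor, so that
$$ e^{D}=\prod_{i=1}^{k}\left(1+|z^{(i)}|^2\right)^{\frac{K}{h}(n_i+1)}. $$
Since $h=\gcd(n_1+1,\dots,n_k+1)$ divides each $n_i+1$, every exponent $\frac{K}{h}(n_i+1)=K\,\frac{n_i+1}{h}$ is a positive integer; the multinomial theorem then expands each factor, hence the whole product, into a finite sum of terms $c_\alpha\,|z^\alpha|^2$ with $c_\alpha>0$ and $z^\alpha$ a para-holomorphic monomial. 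Subtracting the constant term yields $e^{D}-1=\sum_\alpha c_\alpha|z^\alpha|^2$, a finite sum of squared moduli, and the associated map (a para-complex Veronese--Segre embedding) is the desired analytic \pk immersion into $\DP^N$ with $N+1$ equal to the number of monomials appearing. The only delicate point is to verify that this expansion meets the sign and non-degeneracy requirements of the criterion in the indefinite para-complex setting.

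For the uniqueness statement, under the hypothesis $\sum_i n_i\le 2$, I would proceed by the case-by-case analysis anticipated in the introduction. By Theorem \ref{th.main.1} a manifold admitting an analytic \pk immersion into the non-flat form $\left(\DP^N,g_{pFS}^N\right)$ cannot be Ricci-flat, so the toric para-\KE metric may be assumed non-flat. Writing its \pk potential $\Phi$ as a function of the toric variables $u_j=|z_j|^2$, the para-\KE condition becomes a real Monge--Amp\`ere equation for $\Phi$, while immersibility imposes, via \cite{MSpK}, that $e^{\Phi}-1$ be a finite sum of squared para-holomorphic moduli. The three cases to examine are $\DP^1$, $\DP^2$ and $\DP^1\times\DP^1$. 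In each one I would first integrate the Monge--Amp\`ere equation, obtaining $\Phi$ up to integration constants, and then confront the resulting one-parameter families with the finiteness condition. This should force $\Phi$ into the logarithmic shape $\frac{K}{h}\sum_i(n_i+1)\log(1+|z^{(i)}|^2)$ and, crucially, quantize the admissible scaling to the positive integer multiples $K\in\Z^+$, in parallel with the complex toric results of \cite{loitoric,ny,ms2}.

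The step I expect to be the main obstacle is exactly this last confrontation. In the \K case one linearizes using Bochner coordinates, but these are unavailable here, as already noted; one must therefore read off the quantization of the scaling directly from the explicit solutions of the Monge--Amp\`ere equation, while simultaneously controlling the signs of the coefficients in the para-holomorphic expansion, where the squared modulus is indefinite. Carrying this out separately for $\DP^1$, $\DP^2$ and $\DP^1\times\DP^1$, and ruling out every non-logarithmic solution, is the technical heart of the argument.
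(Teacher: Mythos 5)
Your existence argument is essentially the paper's: the paper (Section \ref{sec.n3}) likewise exhibits the explicit polynomial solutions $\prod_i\left(1+\tfrac{h}{n_i+1}\sum_j\xi^{(i)}_j\eta^{(i)}_j\right)^{(n_i+1)/h}$ of equation \eqref{MA*} and takes their $K$-th powers, which is the same Veronese--Segre expansion you describe; the "delicate sign point" you flag is actually vacuous here, since the immersion criterion \eqref{dd0b} from \cite{MSpK} only asks for a finite decomposition $1+2\sum u_i(\xi)v_i(\eta)$ with no positivity requirement, and your multinomial coefficients are positive anyway.

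The uniqueness half, however, has a genuine gap in the order of operations. You propose to "first integrate the Monge--Amp\`ere equation, obtaining $\Phi$ up to integration constants, and then confront the resulting one-parameter families with the finiteness condition." This step would fail: in the toric variables $x_i=\xi_i\eta_i$ the Einstein condition \eqref{lambda1} is a fully nonlinear second-order PDE whose general solution depends on arbitrary functions (Cauchy data), not on finitely many constants, and it admits no closed-form integration even for $n=1$; there is no "one-parameter family" to confront. The paper works in the reverse order, and each reversal is a concrete idea missing from your proposal: (i) Lemma \ref{lem.polyn} uses analyticity of the immersion together with minimality of $N$ to force $u_i,v_i$ to be polynomials, so the diastasis is a polynomial, or $\tfrac{8}{c}\log$ of a polynomial, in the $x_i$ \emph{before} any PDE is solved; (ii) Remark \ref{l1} then compares degrees on the two sides of the resulting algebraic identity to deduce $\tfrac{4\lambda}{c}\in\Q^+$ and to reduce to $q$-th powers of solutions of the normalized equation \eqref{MA*} --- this is where the quantization of $K$ to $\Z^+$ actually comes from, which you assert but do not derive; (iii) Lemma \ref{pt} classifies the restrictions of polynomial solutions to coordinate axes as $\pm(1+t/r)^k$, and for $n=2$ a Cauchy-data computation (Lemma \ref{Cauchydata}, yielding the Diophantine equation $k_1^2-5k_1+6=0$, hence $k_1\in\{2,3\}$) plus a formal uniqueness lemma for the Cauchy problem completes the classification. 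Note also that the classification must \emph{exclude} every other toric para-K\"ahler--Einstein metric of dimension $\le 2$, not merely examine the three expected targets $\DP^1$, $\DP^2$, $\DP^1\times\DP^1$; without the polynomiality and degree-counting steps your plan gives no handle on that exclusion.
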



\subsection{Description of the paper}

In Section \ref{basics}, we recall basic notions concerning para-complex geometry by introducing the algebra of para-complex numbers $\D$ and its cartesian product $\D^n$.

\noindent
In Section \ref{sec.pk.diastasis}, we focus our attention to \pk manifolds, in particular, those of Einstein type. We indeed characterize them in terms of suitable Monge-Ampère equations and consider more closely the distinguished class of toric ones. Then, after introducing the diastasis function (a distinguished \pk potential) in the \pk context in a way similar to that performed in \cite{Cal}, we characterize \pk space forms, i.e., \pk manifolds with constant para-holomorphic sectional curvature, in terms of such function.

\noindent
Section \ref{pk immersion of toric} is dedicated to the proof of Theorems \ref{th.main.1} and \ref{th.main.2}. More precisely, in the beginning of the section, we prove a preliminary result telling that, if a toric \pk manifold can be \pk and analytically immersed into a \pk space form, then it admits a  polynomial (or a logarithm of a polynomial) potential (that turns out to be a diastastis' function). This allows to reformulate Problem \ref{toricprob} in terms of existence of particular solutions of a distinct Monge-Ampère equation.

\noindent
In Section \ref{sec.th.1}, based on the aforementioned reformulation, we prove Theorem \ref{th.main.1}.

\noindent
In Section \ref{sec.th.2} we focus our attention to the proof of Theorem \ref{th.main.2}. As a first step, we refine  Problem \ref{toricprob} as we are interested in a particular class of \pk immersions, i.e., those into non-flat \pk space forms. As a second step, we prove the technical Lemma \ref{pt}, that is crucial for proving the second part of Theorem \ref{th.main.2}, which is addressed by separately analyzing the case of one-dimensional manifolds (Section \ref{sec.n1}) and the case of two-dimensional ones (Section \ref{sec.n2}). The validity of the first part of the theorem is shown by means of  explicit computations presented in Section \ref{sec.n3}.

\subsection*{Notation}
A multi-index $I=(i_1,\dots,i_n)$ is an element of $\N^n$ and its length $|I|$ is defined as the number $|I|:=\sum_{k=1}^ni_k$.  If $(x_1,\dots,x_n)$ are local coordinates, we define the
derivative operators $\frac{\de^{|I|}}{\de x^I}$ as follows:
$$
\frac{\de^{|I|}}{\de x^I}:=\frac{\de^{|I|}}{\de x_1^{i_1}\cdots \de x_n^{i_n}}\,.
$$

\section{Basics of para-complex geometry}\label{basics}
\subsection{Para-holomorphic  functions}

The 2-dimensional algebra over $\R$ of \emph{para-complex numbers} $\D$ is generated by $1$ and $\tau$, where 
$$\tau^2=1\,.
$$
In  analogy with the complex numbers, we are going to adopt the notation used in \cite{hl}: each $z\in\D$ can be written as 
$$
z=x+\tau y\,,
$$
and we are going to refer to $x$ and $y$ as the \emph{real} and \emph{imaginary part} of $z$, respectively.
In analogy with the complex numbers, we define the conjugate of $z$
$$ \bar z=x-\tau y$$
and 
$$|z|^2=z\bar z=x^2-y^2.$$
It will be useful to introduce also another coordinate system on $\D$, described as follows. We switch the basis $(1,\tau)$ with $(e,\bar e)$, where
$$
e=\frac{1}{2}\left(1-\tau\right)\,,\qquad \bar e=\frac{1}{2}\left(1+\tau\right)\,,
$$
and we are going to say that $(u,v)$ are the \emph{null-coordinates} of $z$ if  $z=ue+v\bar e$. Note that null-coordinates are uniquely determined once we fix a coordinate system $z=x+\tau y$.

Now we can translate on $\D^n$  what we said about $\D$. In particular, for any $z,w\in\D^n$, we define 
$$\langle z, w\rangle= \sum_{i=1}^n z_i\bar w_i$$
and
$$\|z\|^2:=\sum_{i=1}^n |z_i|^2.$$

\begin{df}
A  function
$$
\begin{array}{rrcl}
F:&U\subseteq \D^n&\longrightarrow&\D\\
& (z_1,\mathellipsis,z_n)&\longmapsto& g(x_1,y_1,\mathellipsis,x_n,y_n)+ \tau\ h(x_1,y_1,\mathellipsis,x_n,y_n),
\end{array}$$ 
where $z_i=x_i+\tau y_i$, is called \emph{para-holomorphic} if and only if $g$ and $h$ are smooth and
\begin{equation*}
\frac{\de F}{\de\bar z_i}:=\frac{1}{2}\left( \frac{\de g}{\de x_i}-\frac{\de h}{\de y_i}\right)+\frac{\tau}{2}\left( \frac{\de h}{\de x_i}-\frac{\de g}{\de y_i}\right)=0
\end{equation*}
for any $1\leq i\leq n$.
\end{df}
The number $n$ stands for the \emph{``para-complex" dimension} of $\D^n$.
In analogy to the complex setting, the differential operator $\frac{\de}{\de z_i}$ is defined by
\begin{equation*}
\frac{\de F}{\de z_i}:=\overline{\left( \frac{\de \bar F}{\de \bar z_i}\right)}.
\end{equation*}
\begin{rem}\label{ph1}
Let  $F:U\subseteq \D^n\to\D$ be a para-holomorphic function. By considering the null-coordinates 
$$
(\xi,\eta)=(\xi_1,\mathellipsis,\xi_n,\eta_1,\mathellipsis,\eta_n)
$$ 
on $\D^n$, that, as for $n=1$, are uniquely determined once a coordinate system $z_i=x_i+\tau y_i$ is fixed,
$F$ can be written as follows:
$$
F(\xi_1e+\eta_1\bar e,\mathellipsis,\xi_n e+\eta_n\bar e)=u(\xi_1,\eta_1,\mathellipsis,\xi_n ,\eta_n)\ e+ v(\xi_1,\eta_1,\mathellipsis,\xi_n ,\eta_n)\ \bar e\,,
$$ 
where $u$ and $v$ are real functions on an open subset of $\R^{2n}$. 
We then straightforwardly get
$$
\frac{\de F}{\de\bar z_i}=\frac{\de u}{\de \eta_i}e+\frac{\de v}{\de \xi_i}\bar e\,.
$$
Therefore, $F$ is para-holomorphic if and only if $u$ is independent of $(\eta_1,\mathellipsis,\eta_n)$  and $v$ is independent of  $(\xi_1,\mathellipsis,\xi_n)$.
\end{rem}
\begin{df}\label{ph2}
A function 
$$
\begin{array}{rccc}
F:&U\subseteq \D^n&\longrightarrow&\D^m\\
& z=(z_1,\mathellipsis,z_n)&\longmapsto&\big(\, f_1(z),\mathellipsis,f_m(z)\,\big)
\end{array}
$$ 
is para-holomorphic if and only if each component $f_i$ is para-holomorphic.
\end{df}

\section{Para-\K and Para-\KE  manifolds}\label{sec.pk.diastasis}

As we said in the introduction, a para-complex manifold is an almost para-complex manifold such that eigendistributions of the almost para-complex structure $\mathcal{T}$ are integrable: this is the same to require the vanishing of the Nijenhuis tensor associated to $\mathcal{T}$. An equivalent definition is the following.
\begin{df}
A smooth manifold $M^n$ of para-complex dimension $n$ is called \emph{para-complex} if it admits an atlas of para-holomorphic coordinates $(z_1,\mathellipsis,z_n)$, such that the transition functions are para-holomorphic.
\end{df}

Below we shall give a slightly different definition of \pk manifold (respect to that given in the introduction) which takes into consideration the concept of \pk potential, that will be crucial for our purposes. We then clarify, in the Remark \ref{rem.pot}, the relationship between \pk potential and \pk metric.

\begin{df}\label{pk}
A \emph{\pk manifold}  of para-complex dimension $n$ is a para-complex manifold $M$ endowed with a symplectic form $\omega$ (called \emph{\pk form}) such that, for any point $p\in M$, there exists an open neighborhood $U\ni p$ and  a smooth function $\phi:U\to \R$ (called \emph{\pk potential}) satisfying
$$
\omega|_{U}= \frac{\tau}{2}\de\bar\de \phi
$$ 
where
\begin{equation*}\label{eq:di.dibarra.phi}
\de\bar\de\phi:=\sum_{i,j=1}^n\frac{\de^2\phi}{\de z_i\de \bar z_j}dz_i\wedge d\bar z_j\,.
\end{equation*}
\end{df}
\begin{rem}\label{rem.pot}
To a given \pk manifold $(M ,\omega)$ of para-complex dimension $n$ and para-complex structure $\mathcal{T}$, it is associated the pseudo-Riemannian  metric 
$g=\omega\left(\mathcal{T}(\cdot),\,\cdot\,\right)$ on $M$. If $\phi:U\subset M\to\R$ is a \pk potential of $\omega$, the restriction of $g$ to $U$ can be obtained as the real part of 
$$
\sum_{i,j=1}^n\frac{\de^2\phi}{\de z_i\de \bar z_j}dz_i\otimes d\bar z_j\,,
$$
i.e., in null coordinates $(\xi,\eta)$, the above pseudo-Riemannian metric reads as
\begin{equation}\label{eq:metric.from.pot}
\sum_{i,j=1}^n\frac{\de^2\phi}{\de \xi_i\de  \eta_j}d\xi_i\otimes d \eta_j\,.
\end{equation}
\end{rem}
\begin{df}\label{pke}
A \pk manifold $(M ,\omega)$ is called \emph{para-\KE} if  the associated pseudo-Riemannian metric $g$ is Einstein, namely, if the Ricci tensor of $g$ is proportional to $g$ via a constant $\lambda \in \R$:
\begin{equation}\label{eq.Ricci}
\mathrm{Ric}(g)=\lambda g\,.
\end{equation}
\end{df}
Let  $\left(U,\frac{\tau}{2}\de\bar\de \phi\right)$ be a para-\KE manifold with Einstein constant equal to $\lambda$. Then, after a straightforward computation (see Section 5 of \cite{amt}  for detailed computations of the Ricci tensor's components), we have that condition \eqref{eq.Ricci} is expressed by
\begin{equation}\label{eq.Ricci2}
\de\bar\de\left(\log \left|\det\left( \frac{\de^2 \phi}{\de z_i\de \bar z_j}\right)\right|+\frac{\lambda}{2}\phi\right)=0\,, \quad 1\leq i,j\leq n\,.
\end{equation}
 Hence, by taking into account  the system of null coordinates $(\xi,\eta)$, equation \eqref{eq.Ricci2} translates into
\begin{equation}\label{invdiast}
\left|\det\left( \frac{\de^2 \phi}{\de \xi_i\de \eta_j}\right)(\xi,\eta)\right|=e^{-\frac{\lambda}{2}\phi(\xi,\eta)+F(\xi)+G(\eta)}\,, \quad 1\leq i,j\leq n\,,
\end{equation}
where $F$ and $G$ are smooth functions.

\subsection{Toric \pk manifolds}

Let  $U\subseteq \D^n$ be an open neighborhood of 0, where it is defined a \pk potential reading as
$$\phi(|z_1|^2,\mathellipsis, |z_n|^2).$$
Hence, if  $\xi=(\xi_1,\mathellipsis,\xi_n)$ and $\eta=(\eta_1,\mathellipsis,\eta_n)$ are  the null-coordinates, we have that 
\begin{equation}\label{inv}
\phi(|z_1|^2,\mathellipsis, |z_n|^2)=\phi(\xi_1\eta_1,\mathellipsis, \xi_n\eta_n).
\end{equation}
A potential of the above form is called \emph{toric}. A \pk manifold with a toric-invariant potential $\phi$ is called a \emph{toric \pk manifold}. In this case, up to rescaling the coordinates $(\xi,\eta)$, we can assume, w.l.o.g., that
\begin{equation}\label{eq.nonso}
\left|\det\left( \frac{\de^2 \phi}{\de \xi_i\de \eta_j}\right)(\xi,0)\right|=\left|\det\left( \frac{\de^2 \phi}{\de \xi_i\de \eta_j}\right)(0,\eta)\right|=1\, , \quad 1\leq i,j\leq n\,.
\end{equation}
On account of \eqref{eq.nonso}, by evaluating \eqref{invdiast} at $\eta=0$ and recalling \eqref{inv},
 we get that $F$ in \eqref{invdiast} is constant and, in particular, it is equal to $\frac{\lambda}{2}\phi(0)-G(0)$. With a similar reasoning, we can also prove that $G$ in \eqref{invdiast} is constant. 
Therefore,  by replacing  $\phi$ with 
\begin{equation}\label{repl}
\Phi(\xi,\eta)= \phi(\xi_1\eta_1,\mathellipsis, \xi_n\eta_n) -\phi(0),
\end{equation}
so that $\Phi(0,0)=0$,
equation \eqref{invdiast} can be put in the following form:
\begin{equation}\label{MAlambda}
\left|\det\left( \frac{\de^2 \Phi}{\de \xi_i\de \eta_j}\right)\right|=
e^{-\frac{\lambda}{2}\Phi}\,, \quad 1\leq i,j\leq n\,;
\end{equation}

\begin{rem}
We notice that, by introducing the new set of variables
$$y_i=\log\abs{\xi_i\eta_i}$$
and the unknown function
$$\tilde \Phi=\Phi-\frac{2}{\lambda}\sum_{i=1}^n y_i,$$
 the  Monge-Ampère equation  \eqref{MAlambda} reads as 
\begin{equation*}
\left|\det\left( \frac{\de^2 \tilde\Phi}{\de y_i\de y_j}\right)\right|=
e^{-\frac{\lambda}{2}\tilde\Phi}\,,\quad 1\leq i,j\leq n\,.
\end{equation*}
Such  extensively studied equation could provide inputs for new strategies to tackle Problem \ref{toricprob}, as in the past for the \K counterpart (see e.g. \cite{bb}).
\end{rem}

\subsection{Diastasis}\label{sec.diastasis}

Let  $U$ be an open subset of a \pk manifold  $(M,\omega)$ of \pk dimension $n$,  where it is defined a local potential $\Phi$. We  assume that $U$ can be covered by a system of null-coordinates 
$$
(\xi,\eta)=(\xi_1,\mathellipsis,\xi_n,\eta_1,\mathellipsis,\eta_n).
$$
Up to shrinking $U$, we can also assume that it splits as a product 
$$U=\Omega\times \Omega.$$  
According to \cite{MSpK}, we define the \emph{diastasis function}
$$\dd:U\times U=\Omega\times\Omega\times\Omega\times\Omega\to\R$$
as
$$
\dd\left(\xi,\eta,\zeta,\lambda\right)=\Phi(\xi,\eta)-\Phi(\zeta,\eta)-\Phi(\xi,\lambda)+\Phi(\zeta,\lambda).
$$

\begin{prop}[\cite{MSpK}]
The diastasis is a function defined in a neighborhood of the diagonal of the product
manifold $M\times M$. In particular, it is independent of the choice of the \pk potential.
\end{prop}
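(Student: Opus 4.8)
The plan is to trace the two choices entering the definition of $\dd$ — the local \pk potential and the system of null-coordinates — and to show that each leaves the alternating sum unchanged. I would begin with the potential. If $\Phi$ and $\Phi'$ are two \pk potentials for the same form $\omega$ on $U$, then $\de\bar\de(\Phi'-\Phi)=0$; since in null-coordinates the associated metric involves only the mixed second derivatives (see \eqref{eq:metric.from.pot} and Remark \ref{rem.pot}), this is equivalent to $\frac{\de^2(\Phi'-\Phi)}{\de\xi_i\de\eta_j}=0$ for all $i,j$. On the product domain $U=\Omega\times\Omega$ this integrates to
$$
\Phi'(\xi,\eta)=\Phi(\xi,\eta)+A(\xi)+B(\eta),
$$
with $A$ depending only on $\xi$ and $B$ only on $\eta$ — the para-complex analogue of the additive freedom ``real part of a para-holomorphic function''. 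Substituting into $\dd=\Phi(\xi,\eta)-\Phi(\zeta,\eta)-\Phi(\xi,\lambda)+\Phi(\zeta,\lambda)$, each of $A(\xi),A(\zeta),B(\eta),B(\lambda)$ occurs exactly twice with opposite signs, so the entire correction cancels and $\dd$ is unchanged. This is precisely the assertion that $\dd$ is independent of the potential.

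Next I would address the dependence on the coordinates, which is what upgrades $\dd$ to a genuine function on $M\times M$. The crucial structural input is Remark \ref{ph1}: a para-holomorphic transition map decouples in null-coordinates as $\tilde\xi=u(\xi)$, $\tilde\eta=v(\eta)$. Consequently the ``mixed'' point with null-coordinates $(\zeta,\eta)$ — which lies in $U$ exactly because $U=\Omega\times\Omega$ is a product — is intrinsic: its new null-coordinates are $(u(\zeta),v(\eta))$, assembled separately from the $\xi$-part of $q$ and the $\eta$-part of $p$. Since the potential is an honest scalar, each of the four evaluations $\Phi(\xi,\eta),\Phi(\zeta,\eta),\Phi(\xi,\lambda),\Phi(\zeta,\lambda)$ is computed at a well-defined point of $M$, and the alternating sum is therefore insensitive to the choice of para-holomorphic chart.

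Combining the two invariances, the locally defined functions $\dd_U$ on $U\times U$ agree on overlaps; since each $U\times U$ contains a neighborhood of the portion of the diagonal lying in $U$, they patch to a single function on a neighborhood of the diagonal of $M\times M$, as claimed. The cancellation is a one-line telescoping and presents no difficulty; the step requiring care is the second one, namely verifying that the mixed evaluations are geometrically meaningful. This rests entirely on the decoupling of para-holomorphic maps granted by Remark \ref{ph1} together with the product structure of $U$ — without either of these the mixed point $(\zeta,\eta)$ would carry no intrinsic sense, and the diastasis would fail to be well defined.
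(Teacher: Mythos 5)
The paper itself offers no proof of this proposition: it is imported verbatim from \cite{MSpK}, so there is no in-paper argument to compare against. Judged on its own terms, your proposal is correct and is the natural para-complex adaptation of Calabi's classical argument. The first half is exactly right: $\de\bar\de(\Phi'-\Phi)=0$ forces all mixed derivatives $\frac{\de^2(\Phi'-\Phi)}{\de\xi_i\de\eta_j}$ to vanish, which on a connected product domain $\Omega\times\Omega$ integrates to $\Phi'-\Phi=A(\xi)+B(\eta)$, and this additive freedom telescopes out of the alternating sum. (You should say explicitly that $\Omega$ is taken connected, which is harmless after shrinking $U$, since the integration step uses it.) The second half is also the right idea and is the genuinely geometric point: by Remark \ref{ph1} a para-holomorphic transition map splits as $\tilde\xi=u(\xi)$, $\tilde\eta=v(\eta)$, so the mixed point with null-coordinates $(\zeta,\eta)$ transforms to $(u(\zeta),v(\eta))$, i.e.\ it is intrinsic, and the four evaluations are evaluations of a scalar at four well-defined points; one could add the one-line remark that the two eigendistributions of $\mathcal{T}$ are distinguished by their eigenvalues, so no admissible chart can swap the roles of $\xi$ and $\eta$. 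With those two small explicit remarks your argument is complete and matches what one expects the proof in \cite{MSpK} to be.
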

We define
$$
\dd_p(q):=\dd(q,p)\,.
$$
In particular, taking into account \eqref{repl}, we get
\begin{equation}\label{diast}
\dd_0(\xi,\eta)=\dd(\xi,\eta,0,0)=  \phi(\xi,\eta)-\phi(0,0)=\Phi(\xi,\eta).
\end{equation}

%
%

\subsection{Para-\K space forms}
In complete analogy with the case of K\"ahler manifolds, one can define the para-holomorphic sectional curvature (see for instance \cite{gm}). 
\begin{df}
A \emph{\pk space form} is a \pk manifold with constant para-holomorphic sectional curvature. 
We denote by $\s_c^N$ an $N$-dimensional simply connected para-complex manifold that can be endowed  with a \pk form $\omega_c$ whose associated pseudo-Riemannian metric $g_c$ is complete and has constant para-holomorphic sectional curvature equal to $c$.
\end{df}

\begin{prop}[\cite{gm} Prop. 3.11]
Two complete and simply connected \pk space forms with the same para-holomorphic sectional curvature are para-holomorphically isometric. 
\end{prop}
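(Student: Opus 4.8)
The plan is to follow the classical template for the rigidity of space forms, transplanting it to the para-\K setting, where the role played by the complex structure in the \K case is now played by the para-complex structure $\mathcal{T}$, which satisfies $\mathcal{T}^2=1$ and, on a \pk manifold, is parallel ($\nabla\mathcal{T}=0$). The first step is purely algebraic and pointwise: I would verify that the condition of constant para-holomorphic sectional curvature $c$ forces the full Riemann curvature tensor to take the explicit ``space form'' shape
\begin{equation*}
R(X,Y)Z=\frac{c}{4}\big(g(Y,Z)X-g(X,Z)Y-g(\mathcal{T}Y,Z)\mathcal{T}X+g(\mathcal{T}X,Z)\mathcal{T}Y+2g(\mathcal{T}X,Y)\mathcal{T}Z\big),
\end{equation*}
where the signs of the $\mathcal{T}$-terms are those dictated by $\mathcal{T}^2=+1$. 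This is obtained by polarizing the quadratic expression that computes the para-holomorphic sectional curvature, using the first Bianchi identity and the $\mathcal{T}$-invariance of $R$ coming from $\nabla\mathcal{T}=0$. The outcome is that, at every point, the curvature tensor is determined algebraically by $c$ together with the pair $(g,\mathcal{T})$.

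Second, given two complete simply connected \pk space forms $M_1,M_2$ sharing the constant $c$, I would fix base points $p_i\in M_i$ and choose a linear isomorphism $L:T_{p_1}M_1\to T_{p_2}M_2$ that is simultaneously an isometry of the neutral metrics and intertwines the para-complex structures, i.e. $L\circ\mathcal{T}_1=\mathcal{T}_2\circ L$; such an $L$ exists because both tangent spaces are neutral para-Hermitian vector spaces of the same dimension. By the first step, $L$ then automatically carries the curvature tensor of $M_1$ at $p_1$ to that of $M_2$ at $p_2$, and since both $g$ and $\mathcal{T}$ are parallel, the curvature is given everywhere by the same universal formula in terms of $(g,\mathcal{T})$. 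The hypotheses of the Cartan--Ambrose--Hicks theorem (in its pseudo-Riemannian form) are therefore met, and the development of $L$ along geodesics, globalized using completeness and simple connectedness, yields a smooth isometry $f:M_1\to M_2$ with $df_{p_1}=L$.

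Finally, I would check that $f$ is para-holomorphic. Since $df_{p_1}$ commutes with $\mathcal{T}$ by construction and $f$ preserves the Levi-Civita connections, the tensor field $f^{*}\mathcal{T}_2-\mathcal{T}_1$ is parallel and vanishes at $p_1$, hence vanishes identically by connectedness; this is exactly the statement that $df\circ\mathcal{T}_1=\mathcal{T}_2\circ df$ everywhere, so $f$ is a para-holomorphic isometry. The step I expect to be the main obstacle is this globalization in neutral signature: unlike the Riemannian case, geodesic completeness of an indefinite metric does not guarantee that two points are joined by a geodesic, so the Cartan--Ambrose--Hicks development has to be organized as an analytic continuation along broken geodesics (or along arbitrary paths, gluing the local isometries produced by the matching curvature data), and it is precisely here that simple connectedness is needed to rule out monodromy. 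The algebraic reduction of the first step, by contrast, is routine once the para-Hermitian sign conventions forced by $\mathcal{T}^2=1$ are tracked correctly.
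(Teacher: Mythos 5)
This proposition is not proved in the paper at all: it is imported verbatim from Gadea--Montesinos Amilibia \cite{gm}, Prop.~3.11, so there is no internal argument to compare yours against. Your outline --- polarize the constancy of the para-holomorphic sectional curvature into the explicit algebraic expression of $R$ in terms of $g$ and $\mathcal{T}$ (using $\nabla\mathcal{T}=0$ and the Bianchi identity, and noting that polarization is legitimate because the non-null vectors, for which the para-holomorphic planes are nondegenerate, are dense), transport a para-Hermitian linear isometry of tangent spaces by the pseudo-Riemannian Cartan--Ambrose--Hicks development along broken geodesics, use completeness and simple connectedness to globalize and kill monodromy, and finally propagate $df\circ\mathcal{T}_1=\mathcal{T}_2\circ df$ from the base point by parallelism --- is precisely the standard argument carried out in that reference and in its indefinite-K\"ahler antecedents, and you have correctly isolated the one genuinely delicate point, namely that geodesic completeness in neutral signature does not give geodesic connectedness, so the development cannot be run through a single exponential chart.
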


\noindent By \cite{gm}, we have that  an open subset of a \pk space form is para-holomorphically isometric to an open subset of one  of the subsequent models, according to their (constant) para-holomorphic sectional curvature. Therefore, since we are mainly interested in local \pk immersions into open subsets of \pk space forms, we are going to assume that  $\left(\s_c^N,\omega_c\right)$  is  one of the following models.

\begin{description}
\item[Model for the flat case] The model of the flat \pk space form is
\begin{equation*}
\left(\s_0^N,\omega_0\right)=\left(\D^N,{\tau}\de\bar\de \|z\|^2\right),
\end{equation*}
whose potential reads in null-coordinates $(\xi_1,\mathellipsis, \xi_N,\eta_1,\mathellipsis,\eta_N)$ as 
\begin{equation*}\label{dn}
 4\sum_{i=1}^N \xi_i\eta_i =\dd_0(\xi,\eta), 
\end{equation*}
where $\dd_0$ is defined by \eqref{diast}, leading, via formula \eqref{eq:metric.from.pot}, to the metric
$$ 4 \sum_{i,j=1}^N d\xi_i\otimes d \eta_j\,.$$


\smallskip
\item[Models for the non-flat cases]
Similarly to the real and complex setting, the para-complex projective space $\DP^N$ can be defined as the quotient of
$$\{Z\in\D^{N+1} \ |\ \|Z\|^2>0\}$$
under the equivalence relation given by
$Z\sim W$ if and only if there exists $\alpha\in\D$ such that $Z=\alpha W$ with $|\alpha|^2>0$. Then, our model of non-flat \pk space form will be
\begin{equation*}
(\s_c^N,\omega_c)=\left(\DP^N,\frac{4\tau}{c}\de\bar\de\log \|Z\|^2 \right).
\end{equation*}
In null-coordinates  $(\xi_1,\mathellipsis, \xi_N,\eta_1,\mathellipsis,\eta_N)$ of the affine chart $\mathcal{U}_\alpha:=\{[Z_0,\mathellipsis,Z_N]\in \DP^N \ |\ |Z_\alpha|^2\neq 0\}$, where $\alpha=1,\mathellipsis,n$, i.e., $\xi_i e+\eta_i \bar e= \frac{Z_i}{Z_\alpha}$ for any $i\neq \alpha$, the potential is equal to 
\begin{equation}\label{dpn}
\frac{8}{c}\log\left( 1+2\sum_{i=1}^N \xi_i\eta_i\right)=\dd_0(\xi,\eta),
\end{equation}
where $\dd_0$ is defined by \eqref{diast}, leading, via formula \eqref{eq:metric.from.pot}, to  the metric $\frac{8}{c} g_{pFS}^N$, with $g_{pFS}^N$ the para-Fubini-Study metric defined after Theorem \ref{th.main.1}.
\end{description}

\section{Para-\K immersions of toric para-\KE manifolds in \pk space forms}{\label{pk immersion of toric}}

According to Section \ref{sec.diastasis}, let $U=\Omega\times\Omega$ be an open subset of a \pk manifold of para-complex dimension equal to $n$.
As shown in the Section 3 of \cite{MSpK}, when $(U,\frac{\tau}{2}\de\bar\de \phi)$ admits a \pk immersion into the \pk space form $\s_c$, the diastasis function $\dd_0$ reads as
\begin{subnumcases}{\dd_0(\xi,\eta)=}
  4\sum_{i=1}^N u_i(\xi)v_i(\eta) & \text{if} $c=0$; \label{dd0a}
   \\
   \frac{8}{c}\log\left(1+2\sum_{i=1}^N u_i(\xi)v_i(\eta)\right) & \text{if} $c\neq0$\,, \label{dd0b}
\end{subnumcases}
where $u_i:\Omega \to\R$ and $v_i:\Omega\to\R$ are suitable smooth functions.

As we said in the introduction, one of the main differences one meets in the para-complex context, unlike the complex one, is that para-holomorphic functions are not, in general, analytic, but only $\mathcal{C}^\infty$-smooth. This makes the para-complex geometry much less rigid with respect to the complex one. Taking this into account, we restrict our attention to the case of analytic immersion and toric-invariant \pk potential, hence the study of Problem \ref{toricprob}. In fact, as we shall see below, in this case the analyticity condition implies polynomiality. More precisely we have the following lemma.

\begin{lem}\label{lem.polyn}
Let $\phi$ be a toric-invariant \pk potential.
If $u_i$ and $v_i$ are some analytic functions such that
\begin{equation}\label{eq:1}
\sum_{i=1}^N u_i(\xi)v_i(\eta)=\phi(\xi_1\eta_1,\mathellipsis,\xi_n\eta_n),\end{equation}
where  $N\in\N$ is the smallest possible,
then they are polynomials. Hence, $\phi$ is a polynomial in the variables 
\begin{equation}\label{eq.x.xi.eta}
x_i=\xi_i\eta_i\,.
\end{equation}
\end{lem}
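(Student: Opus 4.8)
The plan is to read the claim off the power-series expansions of the $u_i$ and $v_i$, exploiting the very special ``diagonal'' structure that the toric form of the right-hand side of \eqref{eq:1} forces. I work near $0\in\Omega$, where every function is given by a convergent power series, and I use the multi-index notation $\xi^I=\xi_1^{i_1}\cdots\xi_n^{i_n}$. First I would record that the minimality of $N$ makes both families $\{u_i\}_{i=1}^N$ and $\{v_i\}_{i=1}^N$ linearly independent: any linear relation among the $u_i$ (or the $v_i$) would let one rewrite \eqref{eq:1} using fewer than $N$ terms, contradicting minimality.

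Next, writing
\[
u_i(\xi)=\sum_I a^{(i)}_I\,\xi^I,\qquad v_i(\eta)=\sum_J b^{(i)}_J\,\eta^J,
\]
the left-hand side of \eqref{eq:1} becomes $\sum_{I,J}\big(\sum_{i=1}^N a^{(i)}_I b^{(i)}_J\big)\xi^I\eta^J$. On the other hand, the right-hand side $\phi(\xi_1\eta_1,\dots,\xi_n\eta_n)$ is invariant under the torus action $\xi_k\mapsto t_k\xi_k$, $\eta_k\mapsto t_k^{-1}\eta_k$, which fixes every product $x_k=\xi_k\eta_k$; comparing Taylor coefficients under this action shows that only the ``diagonal'' monomials $\xi^I\eta^I$ can occur. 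Denoting by $c_I$ the coefficient of $\xi^I\eta^I$, this simultaneously exhibits $\phi$ as real-analytic near $0$ with power series $\sum_I c_I x^I$, and comparison of coefficients in \eqref{eq:1} yields the crucial identity
\[
\sum_{i=1}^N a^{(i)}_I\,b^{(i)}_J = c_I\,\delta_{IJ}.
\]

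The heart of the argument is then a rank observation applied to the (infinite) matrix $M=(M_{IJ})$ with $M_{IJ}=\sum_{i=1}^N a^{(i)}_I b^{(i)}_J$. On one hand $M=\sum_{i=1}^N \mathbf a^{(i)}(\mathbf b^{(i)})^{\mathsf T}$ is a sum of $N$ rank-one matrices, so its column space has dimension at most $N$; on the other hand the identity above says $M$ is diagonal with diagonal entries $c_I$, and a diagonal matrix has rank equal to the number of its nonzero entries. Hence at most $N$ of the $c_I$ are nonzero, so $\phi$ is a polynomial in $x_1,\dots,x_n$. To upgrade this to the polynomiality of the factors, I would use that the ``left factor space'' of a minimal separable decomposition is intrinsic: since $\{v_i\}$ is linearly independent, there are points $\eta^{(1)},\dots,\eta^{(N)}$ with $\det\big(v_i(\eta^{(k)})\big)\neq 0$, and inverting this matrix expresses each $u_i$ as a linear combination of the slices $\xi\mapsto \phi(\xi_1\eta^{(k)}_1,\dots,\xi_n\eta^{(k)}_n)$, which are now polynomials in $\xi$. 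Thus each $u_i$ is a polynomial, and the symmetric argument gives the same for each $v_i$.

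The step I expect to be the main obstacle is precisely the passage from ``$u_i,v_i$ merely analytic'' to finiteness: a priori $\phi$ is only a smooth potential and nothing forbids the two analytic factors from having genuinely infinite expansions. The diagonal-plus-bounded-rank mechanism is exactly what rules this out, and it is essential that $N$ be minimal, so that the $u_i$ and $v_i$ are independent and the rank of $M$ is controlled. I would take some care to justify that the coefficient comparison is legitimate---that the series involved converge on a common polydisc about the origin and may be rearranged---but this is routine once the diagonal structure is established.
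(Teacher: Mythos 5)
Your argument is correct, and it reaches the conclusion by a genuinely different route from the paper's. The paper differentiates \eqref{eq:1} with respect to $\xi$ at $\xi=0$, observes that $\frac{\de^{|I|}}{\de\xi^{I}}\phi(\xi_1\eta_1,\dots,\xi_n\eta_n)\big|_{\xi=0}=\eta^{I}\frac{\de^{|I|}\phi}{\de x^{I}}(0)$ is a monomial in $\eta$, and then splits into cases according to the rank of the matrix of jets $\bigl(\frac{\de^{|I_k|}u_j}{\de\xi^{I_k}}(0)\bigr)$: if the rank equals $N$ it solves the resulting linear system by Cramer's rule to get each $v_i$ as a combination of the monomials $\eta^{I_1},\dots,\eta^{I_N}$, and if the rank is always smaller it uses analyticity to promote the relation among all jets at $0$ to a linear relation among the $u_j$ themselves, contradicting minimality of $N$; polynomiality of $\phi$ in $x$ is then a consequence. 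You instead use minimality only once, up front, to get linear independence of both families, then exploit the torus invariance to show the full coefficient matrix $M_{IJ}$ is diagonal while its factorization $M=\sum_i\mathbf a^{(i)}(\mathbf b^{(i)})^{\mathsf T}$ bounds its rank by $N$; this gives directly that $\phi$ is a polynomial with at most $N$ monomials, and polynomiality of the factors follows by slicing at $N$ points where $\det\bigl(v_i(\eta^{(k)})\bigr)\neq 0$. The two proofs share the essential mechanism (toric structure forces a ``diagonal'' coupling of $\xi$ and $\eta$, while the separable form of the left-hand side caps a rank at $N$), but your organization avoids the paper's case analysis, and it yields as an explicit byproduct the bound that $\phi$ has at most $N$ nonzero monomials, which is the quantity the paper later needs when estimating the dimension of the ambient space; the paper's version, on the other hand, stays entirely at the level of finite linear systems of jets at the origin and does not need the infinite-matrix rank language or the rearrangement of double power series. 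The only points requiring the care you already flag are the legitimacy of rearranging the product series on a common polydisc and the standard fact that linearly independent functions admit evaluation points with nonsingular Gram-type matrix; both are routine.
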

\begin{proof}
By differentiating \eqref{eq:1} and evaluating at $\xi=0$, we obtain the following (compatible) linear system
\begin{equation}\label{system}
\begin{pmatrix}
\frac{\de^{|I_1|} u_1}{\de\xi^{I_1}}(0) &\dots&\frac{\de^{|I_1|}  u_N}{\de\xi^{I_1}}(0) \\
\vdots& &\vdots\\
\frac{\de^{|I_N|}  u_1}{\de\xi^{I_N}}(0) &\dots&\frac{\de^{|I_N|}  u_N}{\de\xi^{I_N}}(0) \\
\end{pmatrix}
\begin{pmatrix}
v_1(\eta)\\
\vdots\\
v_N(\eta)
\end{pmatrix}
=\begin{pmatrix}
\eta^{I_1}\frac{\de^{|I_1|}  \phi}{\de x^{I_1}}(0)\\
\vdots\\
\eta^{I_N}\frac{\de^{|I_N|}  \phi}{\de x^{I_N}}(0)
\end{pmatrix},
\end{equation}
where $I_i\in\N^n$ are arbitrary chosen multi-indices. As a first step, we notice that there exist some $I_1,\mathellipsis,I_N\in\N^n$ such that
$$\mathrm{rank} \begin{pmatrix}
\frac{\de^{|I_1|} u_1}{\de\xi^{I_1}}(0) &\dots&\frac{\de^{|I_1|} u_N}{\de\xi^{I_1}}(0) \\
\vdots& &\vdots\\
\frac{\de^{|I_N|} u_1}{\de\xi^{I_N}}(0) &\dots&\frac{\de^{|I_N|} u_N}{\de\xi^{I_N}}(0) \\
\end{pmatrix}>0.
$$ 
Indeed, if, on the contrary, the previous matrix had rank equal to 0 for any $I_1,\mathellipsis,I_N$, then, in view of the analyticity of $u_1,\dots,u_N$, we would have
$$u_1=\mathellipsis=u_N\equiv 0$$  and $\phi\equiv0$ would  not be a potential of a symplectic form.

\smallskip\noindent
Now, if
$$\mathrm{rank} \begin{pmatrix}
\frac{\de^{|I_1|}  u_1}{\de\xi^{I_1}}(0) &\dots&\frac{\de^{|I_1|}  u_N}{\de\xi^{I_1}}(0) \\
\vdots& &\vdots\\
\frac{\de^{|I_N|}  u_1}{\de\xi^{I_N}}(0) &\dots&\frac{\de^{|I_N|}  u_N}{\de\xi^{I_N}}(0) \\
\end{pmatrix}=N$$
for some $I_1,\mathellipsis,I_N\in\N^n$, then, being $(v_1,\mathellipsis,v_N)$ a solution of the linear system \eqref{system}, $v_1,\mathellipsis,v_N$ are polynomials.

\noindent
To conclude, we take into account the case in which
$$\mathrm{rank} \begin{pmatrix}
\frac{\de^{|I_1|} u_1}{\de\xi^{I_1}}(0) &\dots&\frac{\de^{|I_1|} u_N}{\de\xi^{I_1}}(0) \\
\vdots& &\vdots\\
\frac{\de^{|I_N|} u_1}{\de\xi^{I_N}}(0) &\dots&\frac{\de^{|I_N|} u_N}{\de\xi^{I_N}}(0) \\
\end{pmatrix}<N$$
for any $I_1,\mathellipsis,I_N\in\N^n$. In view of what we said above, we can assume that
\begin{equation}\label{rank1}
\mathrm{rank} \begin{pmatrix}
\frac{\de^{|I_1|} u_1}{\de\xi^{I_1}}(0) &\dots&\frac{\de^{|I_1|} u_\rho}{\de\xi^{I_1}}(0) \\
\vdots& &\vdots\\
\frac{\de^{|I_\rho|} u_1}{\de\xi^{I_\rho}}(0) &\dots&\frac{\de^{|I_\rho|} u_\rho}{\de\xi^{I_\rho}}(0) \\
\end{pmatrix}=\rho>0\end{equation}
for some suitable  $I_1,\mathellipsis,I_\rho\in\N^n$ and
\begin{equation}\label{rank2}
\mathrm{rank} \begin{pmatrix}
\frac{\de^{|I_1|} u_1}{\de\xi^{I_1}}(0) &\dots&\frac{\de^{|I_1|}  u_{\rho+1}}{\de\xi^{I_1}}(0) \\
\vdots& &\vdots\\
\frac{\de^{|I_\rho|}  u_1}{\de\xi^{I_\rho}}(0) &\dots&\frac{\de^{|I_\rho|} u_{\rho+1}}{\de\xi^{I_\rho}}(0) \\
\frac{\de^{|J|} u_1}{\de\xi^{J}}(0) &\dots&\frac{\de^{|J|} u_{\rho+1}}{\de\xi^{J}}(0) \\
\end{pmatrix}=\rho\end{equation}
for any $J\in\N^n$.
It follows from the Laplace's expansion w.r.t. the last row of the matrix in \eqref{rank2}, and by considering also \eqref{rank1}, that
\begin{equation*}
\frac{\de^{|J|} u_{\rho+1}}{\de\xi^{J}}(0)=-\frac{K_1}{K_{\rho+1}} \frac{\de^{|J|} u_1}{\de\xi^{J}}(0)-\mathellipsis -\frac{K_\rho}{K_{\rho+1}} \frac{\de^{|J|} u_\rho}{\de\xi^{J}}(0),\qquad \forall J\in\N^n,
\end{equation*}
where $K_i$ denotes the algebraic complement of $i$-th element of the last row of the matrix in \eqref{rank2}.
Since
$$
u_{\rho+1}(\xi)=\sum_{J\in\N^n}\frac{\de^{|J|} u_{\rho+1}}{\de\xi^{J}}(0)\  \frac{\xi^J}{J!} =-\frac{K_1}{K_{\rho+1}} u_1(\xi)-\mathellipsis-\frac{K_\rho}{K_{\rho+1}} u_\rho(\xi)\,,
$$
by taking suitable linear combinations $\tilde v_1,\mathellipsis,\tilde v_\rho$ of $v_1,\mathellipsis,v_n$, we have 
$$\sum_{i=1}^\rho u_i\tilde v_i=\phi.$$ Hence, $N$ in \eqref{eq:1} would not be the smallest integer, as assumed.

\smallskip\noindent
The same reasoning can be applied to the functions $u_i$.
\end{proof}
In view of Lemma \ref{lem.polyn} and by taking into account \eqref{dd0a}-\eqref{dd0b}, we straightforwardly get the following proposition.
\begin{prop}
Let $\phi$ be a toric-invariant \pk potential defined on a open neighborhood $U\subseteq \D^n$ of the origin. Let $(U,\frac{\tau}{2}\de\bar\de \phi)$ be a \pk manifold admitting an analytic \pk immersion into the \pk space form $\s_c$. Recalling that $x_i=\xi_i\eta_i$ (cf. \eqref{eq.x.xi.eta}), the diastasis function $\dd_0:U\to\R$ reads as
\begin{subnumcases}{\dd_0(x_1,\mathellipsis, x_n)=}
 \sum_{\substack{0<|I|\leq d}} a_I x^I & \text{if} $c=0$; \label{diastind_a}
   \\
   \frac{8}{c}\log\left(1+\sum_{\substack{0<|I|\leq d}} a_I x^I\right) & \text{if} $c\neq0\,,$ \label{diastind_b}
\end{subnumcases}
where $d\in\Z^+$, $a_I\in\R$ and $I\in\N^n$.
\end{prop}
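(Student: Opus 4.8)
The plan is to combine three ingredients already established: formula \eqref{diast}, which identifies the diastasis with the toric quantity $\dd_0(\xi,\eta)=\Phi(\xi,\eta)=\phi(\xi_1\eta_1,\dots,\xi_n\eta_n)-\phi(0)$, a function of the products $x_i=\xi_i\eta_i$ vanishing at the origin; the immersion formulas \eqref{dd0a}--\eqref{dd0b}, which express $\dd_0$ through functions $u_i(\xi)$ and $v_i(\eta)$ that are \emph{analytic}, as the immersion is assumed analytic; and Lemma \ref{lem.polyn}, which upgrades an analytic product representation of a toric potential into a polynomial one. The argument then splits according to the sign of $c$.

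In the flat case $c=0$, combining \eqref{dd0a} with \eqref{diast} yields $\phi(x)=4\sum_{i=1}^N u_i(\xi)v_i(\eta)+\phi(0)$. Absorbing the constant $\phi(0)$ as one further product of (constant, hence analytic) functions, this exhibits the toric potential $\phi$ as a finite sum of products of analytic functions of $\xi$ and of $\eta$, so Lemma \ref{lem.polyn} forces $\phi$ to be a polynomial in the $x_i$. Subtracting $\phi(0)$, the diastasis $\dd_0=\phi(x)-\phi(0)$ is then a polynomial in $x$ with vanishing constant term, which is precisely \eqref{diastind_a}.

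In the non-flat case $c\neq0$, I would rewrite \eqref{dd0b} as $1+2\sum_{i=1}^N u_i(\xi)v_i(\eta)=e^{\frac{c}{8}\Phi}$, so that
\[
P(x):=e^{\frac{c}{8}\left(\phi(x)-\phi(0)\right)}-1=2\sum_{i=1}^N u_i(\xi)v_i(\eta)
\]
is at once a function of the products $x_i$ and a finite sum of products of the analytic functions $u_i,v_i$. The goal is to show that $P$ is a polynomial in $x$ with $P(0)=0$, for then $\dd_0=\Phi=\frac{8}{c}\log\!\left(1+P(x)\right)$ is exactly \eqref{diastind_b}.

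The step I expect to be the main obstacle is precisely this last one, because Lemma \ref{lem.polyn} is phrased for a toric \emph{potential}, whereas $P=2\sum_i u_iv_i$ need not be the potential of any para-K\"ahler metric; verifying that $\frac{\tau}{2}\de\bar\de P$ is symplectic would require controlling the Hessian of $e^{\frac{c}{8}\Phi}$, which I would rather avoid. The remedy is to note that the proof of Lemma \ref{lem.polyn} invokes the potential hypothesis only once, to exclude the degenerate case: all it really uses is that the $u_i,v_i$ be analytic and that $\sum_i u_iv_i\not\equiv0$. The latter holds here because $\phi$ is a genuine potential, hence non-constant, so $\Phi$ is non-constant, $e^{\frac{c}{8}\Phi}\not\equiv1$, and $P\not\equiv0$. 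Running the linear-algebra argument of Lemma \ref{lem.polyn} verbatim on the representation $\sum_i u_iv_i=\tfrac12 P$ then shows that $P$ is a polynomial in $x$; since $P(0)=e^{0}-1=0$ it has no constant term, which completes \eqref{diastind_b}.
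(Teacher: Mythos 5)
Your proof is correct and follows the same route as the paper, which states that the proposition follows ``straightforwardly'' from Lemma \ref{lem.polyn} applied to the representations \eqref{dd0a}--\eqref{dd0b} and gives no further detail. Your explicit observation that in the non-flat case the function $e^{\frac{c}{8}\Phi}-1$ is not itself a \pk potential, but that the proof of Lemma \ref{lem.polyn} only uses analyticity of the $u_i,v_i$ and the non-vanishing of the sum (guaranteed here since $\phi$ is a genuine potential), is exactly the point the paper leaves implicit.
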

To sum up, by taking into account  \eqref{MAlambda} and \eqref{diast}, we have that Problem \ref{toricprob} can be reformulated as follows:

\medskip\noindent
\textbf{\large Reformulation of Problem \ref{toricprob}.}
\emph{Find all the solutions of type \eqref{diastind_a}-\eqref{diastind_b} of the following Monge-Ampère equation:}
\begin{equation}\label{lambda1}
\left|\det\left( \frac{\de^2 \dd_0}{\de x_i\de x_j}x_i+\frac{\de \dd_0}{\de x_j}\delta_{ij}\right)\right|=
 e^{-\frac{\lambda}{2}\dd_0}\,, \quad 1\leq i,j\leq n\,.
\end{equation}

\medskip

\subsection{Proof of Theorem \ref{th.main.1}}\label{sec.th.1}

In view of the above Reformulation of Problem \ref{toricprob}, in order to get Theorem \ref{th.main.1}, it will be enough to prove the following properties:
\begin{itemize}
\item there are no solutions of type \eqref{diastind_a} of the Monge-Ampère equation \eqref{lambda1} with $\lambda\neq 0$;
\item there are no solutions of type \eqref{diastind_b} of the Monge-Ampère equation \eqref{lambda1} with $\lambda=0$;
\item  $P$ is a solution of type \eqref{diastind_a} of the Monge-Ampère equation \eqref{lambda1} with $\lambda=0$,  if and only if $\deg P=1$ and the product of the coefficients of the linear terms is $\pm 1$.
\end{itemize}

Let $x=(x_1,\mathellipsis,x_n)$ and let $P(x)$ be a polynomial. 

\smallskip
Concerning the first point, let us assume by contradiction that $ P$ is a solution of the Monge-Ampère equation \eqref{lambda1}. We immediately notice that the left hand side of the equation is a polynomial, while the right hand side is a polynomial only when $P$ is constant. Nevertheless, when $P$ is constant, the left hand side is identically zero, leading to  a contradiction.

\smallskip
Concerning the second point, if we assume by contradiction that $k \log P$ is a solution of the Monge-Ampère equation  \eqref{lambda1} with $\lambda= 0$, then we straightforwardly get that
$$\frac{\det\left[\left(P \frac{\partial^2 P}{\partial{x_\alpha}\partial{x_\beta}}-\frac{\partial P}{\partial{x_\alpha}}\frac{\partial P}{\partial{x_\beta}}\right)x_\alpha+P\frac{\partial P}{\partial{x_\alpha}} \delta_{\alpha\beta}\right]_{1\leq\alpha,\beta\leq n}}{P^{n-1}}=\pm\left(\frac{1}{k} \right)^n P^{-n+1}.$$
We notice that both sides of the  previous equality are polynomials. In particular, the degree of left hand side  cannot be greater that $(n+1)d-n$. Hence, by comparing the degrees of both sides, we get a contradiction. Indeed,
$$(n+1)d-n \geq (n+1)d.$$

\smallskip
Concerning the third point, let $P$ be  a polynomial such that
$$\det\left( \frac{\de^2 P}{\de x_i\de x_j}x_i+\frac{\de P}{\de x_j}\delta_{ij}\right)_{1\leq i,j\leq n}=\pm1.$$
By evaluating the previous equality on the line $x_2=\mathellipsis=x_n=0$, we easily get
\begin{equation}\label{l0P}
\left( x_1 \frac{\de^2 P}{\de^2 x_1}\Big|_{(x_1,0)}+\frac{\de P}{\de x_1}\Big|_{(x_1,0)}\right)\prod_{i=2}^n\frac{\de P}{\de x_i}\Big|_{(x_1,0)}=\pm1.\end{equation}
By taking into account that each term of the previous product is a polynomial, we get that each term needs to be  constant. Therefore, we have in particular that
$$\frac{\de^2 P}{\de x_1\de x_j}\Big|_{(x_1,0)}=0$$
and
$$ x_1 \frac{\de^2 P}{\de^2 x_1}\Big|_{(x_1,0)}+\frac{\de P}{\de x_1}\Big|_{(x_1,0)}=k_1,$$
with $k_1\in\R\setminus\{0\}$. By solving the previous equations and keeping in mind  that $P(x_1,0)$ is a polynomial, we obtain
$$P(x)=k_0+k_1x_1+\sum_{i=2}^n x_i \ Q_i(x_2,\mathellipsis,x_n),$$
where $Q_i$ are polynomials. Moreover, by means of a similar reasoning, namely by restricting the Monge-Ampère equation \eqref{l0P} to the other coordinate axes, we finally get that
$$P(x)=k_0+\sum_{i=1}^n k_i x_i ,$$
with $$\prod_{i=1}^nk_i=\pm 1.$$

\subsection{Proof of Theorem \ref{th.main.2}}\label{sec.th.2}

In order to prove Theorem \ref{th.main.2}, we have to study the case that has remained out the discussion of Section \ref{sec.th.1}, i.e., to study solutions of type  \eqref{diastind_b} of the Monge-Ampère equation \eqref{lambda1} when $\lambda\neq0$. Before doing it, below we give a remark that will be important to give  a further reformulation of Problem \ref{toricprob} in the case under consideration. Then, after proving the technical Lemma \ref{pt}, the second part of Theorem \ref{th.main.2} will be essentially proved in Sections \ref{sec.n1} and \ref{sec.n2}, according to the dimension of the toric \pk manifolds. Section \ref{sec.n3} will complete the proof. 

\begin{rem}\label{l1}
If $\dd_0=\frac{8}{c} \log Q$ is a solution of type  \eqref{diastind_b} of the Monge-Ampère equation \eqref{lambda1}, then the polynomial
\begin{equation}\label{eq.Px} 
Q(x)=1+\sum_{\substack{ 0<|I|\leq d}} a_I x^I,
\end{equation} 
is a solution of the PDE
\begin{equation*} 
\frac{\det\left[\left(Q \frac{\partial^2 Q}{\partial{x_\alpha}\partial{x_\beta}}-\frac{\partial Q}{\partial{x_\alpha}}\frac{\partial Q}{\partial{x_\beta}}\right)x_\alpha+Q\frac{\partial Q}{\partial{x_\alpha}} \delta_{\alpha\beta}\right]_{1\leq\alpha,\beta\leq n}}{Q^{n-1}}=\pm\left(\frac{c}{8}\right)^n Q^{n+1-\frac{4\lambda}{c}}.
\end{equation*}
After the change of coordinates $(x_1,\mathellipsis,x_n)\to \frac{8}{c}(x_1,\mathellipsis,x_n)$, the previous equation reads as 
\begin{equation}\label{MAeq} 
\frac{\det\left[\left(Q \frac{\partial^2 Q}{\partial{x_\alpha}\partial{x_\beta}}-\frac{\partial Q}{\partial{x_\alpha}}\frac{\partial Q}{\partial{x_\beta}}\right)x_\alpha+Q\frac{\partial Q}{\partial{x_\alpha}} \delta_{\alpha\beta}\right]_{1\leq\alpha,\beta\leq n}}{Q^{n-1}}=\pm Q^{n+1-\frac{4\lambda}{c}}.
\end{equation}
Taking into account that the left hand side of \eqref{MAeq} is a polynomial, it straightforwardly follows that 
$\frac{4\lambda}{c}\in\Q$. Moreover, by comparing the degrees of both sides of  \eqref{MAeq}, we get 
$$(n+1)d-n \geq \left(n+1-\frac{4\lambda}{c}\right)d.$$
Hence, $$\frac{4\lambda}{c}\in\Q^+.$$
Let us assume that $\frac{4\lambda }{c}=\frac{s}{q}$, where $\mathrm{gcd}(q,s)=1$. Being  the left hand side of \eqref{MAeq} a polynomial, $Q$ is forced to be the $q$-th power of another polynomial $R$, namely 
$$Q(x_1,\mathellipsis,x_n)=R\left(\frac{x_1}{q},\mathellipsis,\frac{x_n}{q}\right)^q.$$
Since the constant term of $Q$ is equal to $1$, we notice, instead, that the constant term of $R$ can be  equal either to  $1$ or $-1$.
Moreover, one can easily check that $R(x_1,\mathellipsis,x_n)$ is a solution to the equation
\begin{equation}\label{MA**}
\frac{\det\left[\left(R \frac{\partial^2 R}{\partial{x_\alpha}\partial{x_\beta}}-\frac{\partial R}{\partial{x_\alpha}}\frac{\partial R}{\partial{x_\beta}}\right)x_\alpha+R\frac{\partial R}{\partial{x_\alpha}} \delta_{\alpha\beta}\right]_{1\leq\alpha,\beta\leq n}}{R^{n-1}}
=\pm R^{n+1-s}.
\end{equation}
Furthermore, if a polynomial $R(x_1,\mathellipsis,x_n)$ is a solution of \eqref{MA**} for some $s\in\N$, then 
$$
P(x_1,\mathellipsis,x_n)=R\left(\frac{x_1}{s},\mathellipsis,\frac{x_n}{s}\right)^{s}
$$ 
is a polynomial solution to the equation 
\begin{equation}\label{MA*}
\frac{\det\left[\left(P \frac{\partial^2 P}{\partial{x_\alpha}\partial{x_\beta}}-\frac{\partial P}{\partial{x_\alpha}}\frac{\partial P}{\partial{x_\beta}}\right)x_\alpha+P\frac{\partial P}{\partial{x_\alpha}} \delta_{\alpha\beta}\right]_{1\leq\alpha,\beta\leq n}}{P^{n-1}}=\pm P^{n}.
\end{equation} 
We notice that the constant term of $P$ is equal to $1$ or $-1$.
\end{rem}
In view of the above remark,  the Reformulation of Problem \ref{toricprob} can be refined, in the case considered in the present section, as follows.

\medskip\noindent
\textbf{\large Refinement of Problem \ref{toricprob}.}\label{Refi}
\emph{Find all polynomials $P^k$ with constant term equal to $1$, with $k\in \Q^+$ and such that $P$ is a polynomial solution,  having constant term  equal to $\pm1$, to the  Monge-Ampère equation \eqref{MA*}}.

\medskip
Indeed, by considering \eqref{eq.x.xi.eta} and \eqref{diastind_b}, we can associate to such $P^k$ the diastasis' function of a \pk metric admitting a \pk  immersion into $(\DP^N,g_{pFS}^N)$, where $N$ need to be at least equal to the number of monomials forming $P^k$ (see \cite{MSpK}):
\begin{equation}\label{eq:appoggio}
\log P(\xi_1\eta_1,\mathellipsis,\xi_n\eta_n)^k. 
\end{equation}
Being $P$ a solution of the  Monge-Ampère equation \eqref{MA*}, 
it necessarily follows that, for any aforementioned admissible $k$,
any diastasis' function \eqref{eq:appoggio} gives rise to a para-\KE metric which is \pk  immersed into $(\DP^N,g_{pFS}^N)$.

\smallskip\noindent
This can be achieved, in the case when the dimension $n$ is equal either to $1$ (see Section \ref{sec.n1}) or $2$ (see Section \ref{sec.n2}), by using the following lemma, that holds in any dimension. A discussion of the cases with dimension $n\geq 3$ is contained in Section \ref{sec.n3}.
%
\begin{lem}\label{pt}
The restriction $p(t)$ on a coordinate axis  of a polynomial solution, having constant term  equal to $\pm1$, to the Monge-Amp\`ere equation \eqref{MA*}, reads as:
\begin{equation*}
  p(t)=\pm\left(1+ \frac{t}{r}\right)^k\,,
\end{equation*}
where $k\in\Z^+$ and $r\in\R$.
\end{lem}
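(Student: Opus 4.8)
The plan is to restrict equation \eqref{MA*} to one coordinate axis, say the $x_1$-axis (the other axes being identical after relabelling), and to convert the resulting identity into information about the factorization of the one-variable restriction $p(t):=P(t,0,\dots,0)$. First I would evaluate the matrix in \eqref{MA*} at $x_2=\dots=x_n=0$: in every row $\alpha\ge2$ the prefactor $x_\alpha$ annihilates the first summand, so that row reduces to its diagonal entry $P\,\partial_{x_\alpha}P=p\,q_\alpha$, where $q_\alpha:=\partial_{x_\alpha}P|_{x_2=\dots=x_n=0}$. Expanding the determinant along these sparse rows (exactly as in the computation producing \eqref{l0P}) and cancelling $P^{n-1}$, equation \eqref{MA*} becomes the scalar polynomial identity
\[
M_{11}\,\prod_{\alpha=2}^{n}q_\alpha=\pm\,p^{\,n},\qquad M_{11}:=t\bigl(p\,p''-(p')^2\bigr)+p\,p'.
\]
Because $P$ has constant term $\pm1$ we have $p(0)=\pm1\ne0$, so $0$ is not a root of $p$; and $p$ is nonconstant, since otherwise $M_{11}\equiv0$ while the right-hand side does not vanish.

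The algebraic heart of the argument is the identity $M_{11}=p^{2}\,(t\,p'/p)'$. Factoring $p=c\prod_{j=1}^{J}(t-t_j)^{m_j}$ over $\C$ with distinct, nonzero roots $t_j$, and using $t\,p'/p=\deg p+\sum_j m_jt_j/(t-t_j)$, one finds
\[
M_{11}=-c^{2}\,S\,\prod_{j}(t-t_j)^{2m_j-2},\qquad S:=\sum_{j}m_jt_j\prod_{k\ne j}(t-t_k)^{2}.
\]
A one-line evaluation gives $S(t_i)=m_it_i\prod_{k\ne i}(t_i-t_k)^{2}\ne0$, so $S$ is coprime to $p$. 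Substituting into the scalar identity yields $S\cdot A=\pm\,c^{\,n-2}\prod_j(t-t_j)^{nm_j}$, where $A:=\prod_j(t-t_j)^{2m_j-2}\prod_\alpha q_\alpha$ is a polynomial; since $S$ is coprime to every factor $(t-t_j)$, it must divide the constant $c^{\,n-2}$, and therefore $S$ is a nonzero constant.

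It then remains to show that $S$ constant forces $J=1$, and this is where the real difficulty lies: the axis-restricted equation is not a closed ODE for $p$, because the transverse data $q_\alpha$ are unknown, so $p$ cannot be recovered by integration, and the only available leverage is the polynomiality and coprimality just exploited. With $S=C$ constant the identity reads $(t\,p'/p)'=-C/\prod_j(t-t_j)^{2}$; the left-hand side has zero residue at each $t_l$ (being the derivative of a rational function), whereas the residue of the right-hand side at $t_l$ is a nonzero multiple of $\sum_{k\ne l}(t_l-t_k)^{-1}$. Matching residues gives $\sum_{k\ne l}(t_l-t_k)^{-1}=0$ for every $l$; multiplying the $l$-th relation by $t_l$, summing over $l$, and pairing the terms of each index pair $\{l,k\}$ turns the left side into $\sum_{\{l,k\}}1=\binom{J}{2}$, so $\binom{J}{2}=0$ and hence $J=1$. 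Thus $p$ has a single root, which is real since $p\in\R[t]$, and the normalization $p(0)=\pm1$ yields $p(t)=\pm(1+t/r)^{k}$ with $k=\deg p\in\Z^{+}$ and $r\in\R$, as claimed.
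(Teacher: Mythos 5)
Your proof is correct. Its skeleton is the same as the paper's: restrict \eqref{MA*} to an axis so that the rows $\alpha\ge 2$ become diagonal, obtain the scalar identity $M_{11}\prod_{\alpha\ge2}q_\alpha=\pm p^{\,n}$ with $M_{11}=t\bigl(pp''-(p')^2\bigr)+pp'$, factor $p$ over its distinct roots, isolate the auxiliary polynomial $S=\sum_j m_jt_j\prod_{k\ne j}(t-t_k)^2$, show it is a nonzero constant, and conclude that $p$ has a single root. The two key sub-steps, however, are executed differently. To show $S$ is constant, the paper first argues that every root of the transverse factor $q$ is a root of $p$ and then evaluates the resulting identity at each root $-r_i$ to kill the exponents; you instead observe that $S(t_i)\ne 0$ makes $S$ coprime to $\prod_j(t-t_j)$ and invoke divisibility in $\R[t]$, which is cleaner and bypasses any discussion of the roots of $q$. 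To pass from ``$S$ constant'' to ``one root'', the paper reads the vanishing of the top $R$ coefficients of $S$ as a homogeneous linear system in the multiplicities and computes its determinant $R!\prod_i r_i\prod_{i<j}(r_i-r_j)\ne 0$; you instead match residues in $(tp'/p)'=-C/\prod_j(t-t_j)^2$ to get $\sum_{k\ne l}(t_l-t_k)^{-1}=0$ for every $l$, and the symmetrization $\sum_{l\ne k}\frac{t_l}{t_l-t_k}=\binom{J}{2}$ forces $J=1$. Your route avoids the (unproved in the paper) determinant evaluation at the cost of a short complex-analytic detour; both are valid, and both correctly use $p(0)=\pm1$ to exclude $0$ as a root. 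The only thing the paper's proof delivers beyond the statement is the explicit form \eqref{q} of $q(t)$, which is reused later in Lemma \ref{Cauchydata}; your argument would yield it with one extra line but as written does not record it.
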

\begin{proof}
Being $p(t)$ be the restriction on the $i$-th coordinate axis (i.e. the line $x_j=0,\text{ for } j\neq i$) of  a  polynomial solution $P$ to the Monge-Amp\`ere equation \eqref{MA*}, we have that
\begin{equation}\label{MArestr}
\left(\left(p(t)\ p''(t)-p'(t)^2\right)t+p(t)\ p'(t)\right) q(t) =\pm p(t)^{n},
\end{equation}
where the $q(t)$ denotes the restriction on the $i$-th coordinate axis of  $\prod_{j\neq i}\frac{\de P}{\de x_j}$.

Let $\{-r_1,\mathellipsis,-r_R\}$ be all the (possibly complex) distinct roots  of  $p$, namely
\begin{equation}\label{p(t)}
p(t)=A\prod_{i=1}^R (t+ r_i)^{k_i},
\end{equation}
with
\begin{equation}\label{distinct}
r_i-r_j\neq 0,\qquad \forall i\neq j.
\end{equation}
Via some straightforward computations, the equation \eqref{MArestr} reads as
\begin{equation}\label{restr}
\left( \sum_{i=1}^R k_i r_i  \prod_{\substack{j=1\\ j\neq i}}^R (t+ r_j)^{2}\right) q(t) =\pm A^{n-2}\prod_{i=1}^R (t+ r_i)^{k_i(n-2)+2}.
\end{equation}
It easily follows that any root of $q(t)$ needs to be  also a root of $p(t)$, namely
$$q(t)=\pm\frac{A^{n-2}}{B} \prod_{i=1}^R (t+ r_i)^{h_i},$$
for some suitable $h_i\in\N$ and $B\in\R\setminus\{0\}$.
Hence, the  equality \eqref{restr} reads as
$$
\sum_{i=1}^R k_i r_i  \prod_{\substack{j=1\\ j\neq i}}^R (t+ r_j)^{2}=B\prod_{i=1}^R (t+ r_i)^{k_i(n-2)-h_i+2}.
$$
If  we assume the existence of an index $i$ such that $k_i(n-2)-h_i+2\neq 0$, then, by evaluating the previous equality at $-r_i$, we  get
$$-k_ir_i \prod_{\substack{j=1\\ j\neq i}}^R ( r_j-r_i)^{2}=0.$$
Nevertheless, it would  contradict \eqref{distinct}. Therefore,
\begin{equation}\label{q(t)}
q(t)=\pm\frac{A^{n-2}}{B} \prod_{i=1}^R (t+ r_i)^{k_i(n-2)+2}
\end{equation}
and
\begin{equation}\label{eq:system.Filippo}
 \sum_{i=1}^R k_i r_i  \prod_{\substack{j=1\\ j\neq i}}^R (t+ r_j)^{2}-B=0.
\end{equation}
Let now consider \eqref{eq:system.Filippo} as a linear system in the variables $k_1,\mathellipsis,k_R$.\\
If $R=1$, such system consists of just one equation, which  has a unique solution: 
$$k_1=\frac{B}{r_1}.$$
If $R\geq 2$, it cannot be compatible for any $t$. Indeed, being the left hand side of \eqref{eq:system.Filippo} a polynomial in $t$ of degree $2R-2$,  in particular its first $R$ coefficients of higher order have to vanish. Therefore, $k_1,\mathellipsis,k_R$ need to satisfy a homogeneous system, whose  determinant of the matrix of coefficients can be easily computed:
$$
R!\prod_{i=1}^R r_i\prod_{1\leq i<j\leq R}(r_i-r_j).
$$
In view of \eqref{distinct}, such determinant is always different from zero. Therefore,  the system admits only the trivial solution, leading to a contradiction, since $k_i$, for any $i$, represents the multiplicity of a root of a polynomial, so it should be positive.

To conclude, since we are assuming that the constant term $\e=Ar_1^{k1}$ of $p$, see \eqref{p(t)}, is equal to $\pm1$,  by taking  into  account \eqref{p(t)} and \eqref{q(t)},  we have  that
$$p(t)=A(t+r_1)^{k_1}=\e\sum_{i=0}^{k_1}\binom{k_1}{i}t^i r_1^{-i}=\e\left( 1+\frac{t}{r_1}\right)^{k_1}$$
and
\begin{equation}\label{q}
q(t)=\pm \e^{n-2} \frac{ r_1}{k_1} \left( 1+\frac{t}{r_1}\right)^{k_1(n-2)+2}.\end{equation}
\end{proof}
%
%
%
%
%
\subsubsection{\textbf{Case $n=1$}}\label{sec.n1}

\

\noindent
In the present section, we shall consider the Refinement of Problem \ref{toricprob} at page \pageref{Refi} when $n=1$ and solve it. In view of Lemma \ref{pt},  polynomial solutions to \eqref{MA*}, that we have been studying, need to read as
$$P(x)=\e\left(1+ \frac{x}{r}\right)^k,$$
where $\e=\pm 1$.
Moreover, after some straightforward computations on the Monge-Ampère equation \eqref{MA*} when $n=1$, namely 
\begin{equation*}
\left(P P''-(P')^2\right)x+PP'=\pm P,
\end{equation*}
we get, in particular, that $k=2$ and $r=\pm 2$, i.e.,
$$P(x)=\e\left( 1\pm\frac{x}{2}\right)^2.$$
In view of the Refinement  of Problem \ref{toricprob} at page \pageref{Refi}
we have to take into account the polynomials
$$P(x)=\left( 1\pm\frac{x}{2}\right)^K,\qquad \text{with }K\in\Z^+.$$
By considering \eqref{eq.x.xi.eta} and \eqref{diastind_b}, we can associate to the previous polynomials, the following two families of diastasis functions:
$$
\log\left( 1+\frac{\xi\eta}{2}\right)^K\qquad\text{and}\qquad \log\left( 1-\frac{\xi\eta}{2}\right)^K\,.
$$
We immediately see that the para-holomorphic change of coordinates $(\xi,\eta)\to(-\xi,\eta)$ 
transforms the first diastasis' function into the second, therefore the corresponding \pk metrics, via formula \eqref{eq:metric.from.pot}, are isometric.  In particular, they are isometric to the metric having the first function as a potential:
$$
\frac{2K}{(\xi\eta+2)^2}d\xi d\eta\,.
$$
By considering the further change of coordinates $(\xi,\eta)\to (2\xi,2\eta)$, we see that the previous metric is $$K\, g_{pFS}^1= \frac{2K}{(1+2\xi\eta)^2} d\xi d\eta.$$

\medskip\noindent
Theorem \ref{th.main.2} when $n=1$ is thus proved.

\subsubsection{\textbf{Case $n=2$}}\label{sec.n2}

\

\noindent
In the present section, we are going to find,  when $n=2$, all polynomial solutions of type \eqref{eq.Px} of the Monge-Ampère equation \eqref{MA*}, namely all the polynomials in two variables with constant term equal to 1 solving the following Monge-Ampère equation:
\begin{equation}\label{MAn2}
\frac{\det\left[\left(P \frac{\partial^2 P}{\partial{x_\alpha}\partial{x_\beta}}-\frac{\partial P}{\partial{x_\alpha}}\frac{\partial P}{\partial{x_\beta}}\right)x_\alpha+P\frac{\partial P}{\partial{x_\alpha}} \delta_{\alpha\beta}\right]_{1\leq\alpha,\beta\leq 2}}{P}=\pm P^{2}.
\end{equation}
More precisely, this section is devoted to prove the following proposition.
\begin{prop} \label{newmain}
The only polynomials in two variables of type  \eqref{eq.Px} solving the  Monge-Amp\`ere equation
\eqref{MAn2}
 are
\begin{equation}\label{solution1}
\e\left(1+\frac{1}{r}x_1\pm\frac{r}{9}x_2 \right)^3
\end{equation}
and
\begin{equation}\label{solution2}
\e\left(1+ \frac{1}{r}x_1\right)^2 \left(1\pm\frac{r}{4}x_2\right)^2\,,
\end{equation}
where $r\neq 0$ is an arbitrary real number and $\e=\pm1$.
\end{prop}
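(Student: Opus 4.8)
The plan is to exploit Lemma \ref{pt} to pin down how $P$ behaves along each coordinate axis, and then to reconstruct the full bivariate polynomial from this axial data together with the structure of equation \eqref{MAn2}. By Lemma \ref{pt}, the restrictions $P(x_1,0)$ and $P(0,x_2)$ must be (up to the global sign $\e$) perfect $k$-th powers of affine functions, say $\e\left(1+\tfrac{x_1}{r}\right)^{k_1}$ and $\e\left(1+\tfrac{x_2}{s}\right)^{k_2}$ for some positive integers $k_1,k_2$ and nonzero reals $r,s$. Since $P$ has constant term $1$, the two restrictions agree at the origin, which fixes the sign $\e$ consistently. The candidate solutions \eqref{solution1} and \eqref{solution2} have axial restrictions that are cubes and squares respectively, so the first concrete task is to determine which pairs $(k_1,k_2)$ are actually admissible.

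First I would determine the admissible degrees. Writing $d=\deg P$ and comparing total degrees of the two sides of \eqref{MAn2} — the numerator of the left-hand side has degree at most $(n+1)d-n = 3d-2$ after dividing by $P$ (of degree $d$), against $P^2$ of degree $2d$ on the right — should force $d\le 3$ (the computation paralleling Remark \ref{l1} with $n=2$). With $d$ bounded, the possible axial exponents $k_1,k_2$ are severely constrained; I expect the analysis to split into the case $k_1=k_2=3$, giving the form \eqref{solution1}, and the case $k_1=k_2=2$, giving \eqref{solution2}, while mixed or higher cases are ruled out by the degree bound and by the $n=1$ computation already carried out in Section \ref{sec.n1} (which forced $k=2$, $r=\pm2$ for the one-variable reduction, and which governs the behaviour of $q(t)$ via formula \eqref{q}).

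Next I would reconstruct $P$ from its axial restrictions. Knowing $P(x_1,0)$ and $P(0,x_2)$, I would posit the most general polynomial of degree $d$ with these restrictions, introducing unknown coefficients only for the genuinely mixed monomials $x_1^ax_2^b$ with $a,b\ge 1$, and substitute into \eqref{MAn2}. Matching coefficients yields an algebraic system in the mixed coefficients, the leading affine slopes $1/r,1/s$, and the sign choices; the relation between $r$ and $s$ (namely $s=\pm r/9$ in the cubic case and $s=\pm r/4$ in the quadratic case) should emerge from requiring the off-diagonal and top-degree terms to vanish or balance. The freedom in $r\neq 0$ survives because the equation is invariant under the anisotropic rescaling $(x_1,x_2)\mapsto(\mu x_1,\mu^{-1}x_2)$, which I would note explicitly to explain why $r$ remains a free parameter.

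The main obstacle I anticipate is the reconstruction step: even with the axial restrictions fixed, verifying that the mixed coefficients are \emph{uniquely} determined — i.e. that no further deformations of $P$ in the interior monomials solve \eqref{MAn2} — requires controlling a nonlinear coefficient-matching system whose size grows with $d$. The determinant structure of the left-hand side (a $2\times 2$ determinant of second-order differential expressions weighted by $x_\alpha$) makes this bookkeeping delicate, and the sign ambiguities ($\pm$ on both sides, and $\e=\pm1$) must be tracked carefully to avoid spurious branches. I would organize this by first eliminating the top-degree terms to force the factorized shape, then showing inductively downward in degree that every remaining coefficient is determined, so that \eqref{solution1} and \eqref{solution2} are the only survivors.
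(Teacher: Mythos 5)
Your starting point (Lemma \ref{pt} applied to both axes) matches the paper's, but the two pillars you build on it do not hold up. First, the degree comparison does not give an upper bound on $d=\deg P$: the numerator of the left-hand side of \eqref{MAn2} has degree at most $4d-2$, so after dividing by $P$ one gets $3d-2\geq 2d$, i.e.\ $d\geq 2$ --- a \emph{lower} bound. Your claimed bound $d\le 3$ is in fact contradicted by the solution \eqref{solution2} itself, which has total degree $4$. Second, the appeal to the $n=1$ computation of Section \ref{sec.n1} to constrain the axial exponents is misplaced: the restriction of the $n=2$ equation to a coordinate axis is not the $n=1$ equation but \eqref{MArestr}, which carries the extra factor $q(t)$ (the restriction of $\de P/\de x_2$), so the exponent is not forced to be $2$; indeed both $k=2$ and $k=3$ occur. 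Without a correct mechanism to pin down the admissible exponents, the case split into \eqref{solution1} versus \eqref{solution2} is not established.

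Third, the reconstruction step --- showing that the axial data determines $P$ uniquely --- is exactly where the real work lies, and you leave it as an acknowledged obstacle rather than resolving it. The paper closes both gaps as follows: Lemma \ref{pt} yields not only the axial restrictions but also, via \eqref{q}, the first transverse derivatives $\de P/\de x_2(x_1,0)$ and $\de P/\de x_1(0,x_2)$; cross-comparing mixed second derivatives at the origin forces $k_1=k_2$ and relates $r_2$ to $r_1$; substituting the resulting ansatz into \eqref{MAn2} and extracting the coefficient of $x_1x_2$ produces the Diophantine equation $k_1^2-5k_1+6=0$, whence $k_1\in\{2,3\}$ (Lemma \ref{Cauchydata}). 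Uniqueness is then obtained not by coefficient matching but by a Cauchy--Kovalevskaya-type argument: formula \eqref{unique} shows that $\de^{h}F/\de x_2^{h}(x_1,0)=0$ determines $\de^{h+1}P/\de x_2^{h+1}(x_1,0)$ recursively, because its coefficient $\frac{k}{r}\left(1+\frac{x_1}{r}\right)^{2k-2}$ is not identically zero, so the Cauchy data \eqref{Cauchy} determine the entire Taylor expansion of $P$ along $x_2=0$. Your observation about the anisotropic rescaling $(x_1,x_2)\mapsto(\mu x_1,\mu^{-1}x_2)$ explaining the free parameter $r$ is correct and a nice touch, but it does not substitute for the uniqueness argument.
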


\noindent
The proof  follows from the following two lemmas.
\begin{lem}\label{Cauchydata}
An arbitrary polynomial solution, with constant term equal to $\pm1$, of the Monge-Amp\`ere equation  \eqref{MAn2}, satisfies  one and only one of the following initial conditions on the coordinate axis $x_2=0$:
\begin{equation}\label{Cauchy}
\begin{array}{c}
P(x_1,0)=\e\left(1+\frac{x_1}{r}\right)^2,\ \frac{\de P}{\de x_2}(x_1,0)=\pm\frac{r}{2}\left(1+\frac{x_1}{r}\right)^2 \vspace{0,2cm} \\
\vspace{0,2cm}
 \text{ or } \\
P(x_1,0)=\e\left(1+\frac{x_1}{r}\right)^3,\ \frac{\de P}{\de x_2}(x_1,0)=\pm\frac{r}{3}\left(1+\frac{x_1}{r}\right)^2,\\
\end{array}
\end{equation}
where $\e=\pm1$.
\end{lem}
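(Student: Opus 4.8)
The plan is to read \eqref{MAn2} along the axis $x_2=0$ and then propagate it transversally. First I would observe that the off-diagonal entry $M_{21}=\big(P\frac{\de^2P}{\de x_1\de x_2}-\frac{\de P}{\de x_1}\frac{\de P}{\de x_2}\big)x_2$ carries the factor $x_2$ and hence vanishes on $\{x_2=0\}$, so on that axis the determinant in \eqref{MAn2} collapses to $M_{11}M_{22}$. Setting $p(t):=P(t,0)$ and $w(t):=\frac{\de P}{\de x_2}(t,0)$, the restricted equation becomes the single relation $\big[(p\,p''-(p')^2)t+p\,p'\big]\,w=\pm p^2$, which is exactly the $n=2$ setting of Lemma \ref{pt}: its formula \eqref{p(t)} gives $p(t)=\e(1+t/r)^{k}$ for some $k\in\Z^{+}$, and the companion formula \eqref{q} identifies the transverse factor $q$ — here simply $w$ — as $w(t)=\pm\frac{r}{k}(1+t/r)^{2}$. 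Thus both Cauchy data already have the asserted shape, and the two alternatives in \eqref{Cauchy} are precisely the instances $k=2$ and $k=3$; the whole remaining content of the lemma is that no other positive integer occurs.

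To determine $k$ I would expand \eqref{MAn2} in powers of $x_2$ about the axis. The crucial structural claim is that the Taylor coefficients stay pure powers of $1+t/r$, namely
\[
\tfrac{1}{m!}\tfrac{\de^{m}P}{\de x_2^{m}}(t,0)=c_m\,(1+t/r)^{\,k-m(k-2)},\qquad m\ge 0,
\]
with $c_0=\e$, $c_1=\pm\frac{r}{k}$, and, from the first transverse derivative of \eqref{MAn2}, $c_2=\frac{\e\,r^{2}(k-1)}{2k^{3}}$ (so that $\frac{\de^2P}{\de x_2^2}(t,0)$ has exponent $4-k$). Granting this, I invoke Lemma \ref{pt} on the complementary axis $x_1=0$: it yields $P(0,x_2)=\e(1+x_2/r')^{k}$, a polynomial of $x_2$-degree exactly $k$, whence $c_m=\frac1{m!}\frac{\de^{m}P}{\de x_2^{m}}(0,0)$ is nonzero for $m\le k$ and zero for $m>k$. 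In particular $c_k\neq0$, so $c_k\,(1+t/r)^{\,k(3-k)}$ must be a genuine polynomial, forcing the exponent $k(3-k)\ge 0$, i.e. $k\le 3$. The value $k=1$ is then discarded directly: there $c_m=0$ for all $m\ge 2$, so $P=\e(1+x_1/r)\pm r(1+x_1/r)^2x_2$, and a one-line substitution shows this polynomial does not solve \eqref{MAn2}. Hence $k\in\{2,3\}$, and reading off $p$ and $w$ gives exactly the two lines of \eqref{Cauchy}.

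The step I expect to be the main obstacle is the pure-power claim displayed above. The $x_2$-expansion of \eqref{MAn2} always isolates $\frac{\de^{m+1}P}{\de x_2^{m+1}}(t,0)$ multiplied by the nonvanishing factor $\frac{k\e}{r}(1+t/r)^{3k-2}$, so each coefficient is uniquely determined by the lower ones; the danger is that the explicit factors $x_1=r\big((1+t/r)-1\big)$ occurring in the matrix entries could destroy the single-power form and produce several monomials in $1+t/r$, in which case $c_m=0$ would no longer imply $\frac{\de^{m}P}{\de x_2^{m}}(t,0)\equiv0$. The argument therefore hinges on showing, by induction on $m$, that the same algebraic cancellation that already collapses $(p\,p''-(p')^2)t+p\,p'$ to $\frac{k}{r}(1+t/r)^{2k-2}$ (the identity underlying both the axis reduction and the proof of Lemma \ref{pt}) persists at every order, so that each $\frac{\de^{m}P}{\de x_2^{m}}(t,0)$ is indeed a constant times a single power of $1+t/r$. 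Once this is established the dichotomy $k\in\{2,3\}$ is forced and the lemma follows.
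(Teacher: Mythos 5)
Your reduction to the axis is correct and coincides with the first half of the paper's argument: on $\{x_2=0\}$ the off-diagonal entries of the matrix in \eqref{MAn2} vanish, the equation collapses to $\bigl[(p\,p''-(p')^2)t+p\,p'\bigr]w=\pm p^2$ with $p(t)=P(t,0)$ and $w(t)=\frac{\de P}{\de x_2}(t,0)$, and Lemma \ref{pt} together with \eqref{q} gives $p=\e(1+t/r)^k$ and $w=\pm\frac{r}{k}(1+t/r)^2$. So the only remaining content is indeed the dichotomy $k\in\{2,3\}$, and this is exactly where your argument has a genuine gap. Your route to $k\le 3$ rests entirely on the ``pure-power claim''
\[
\tfrac{1}{m!}\tfrac{\de^{m}P}{\de x_2^{m}}(t,0)=c_m\,(1+t/r)^{\,k-m(k-2)},
\]
which you state, use, and explicitly do not prove. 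There is no a priori reason the recursion preserves single-power form: at each order the new transverse derivative is obtained by dividing a remainder term (a polynomial built from lower-order data and from $x_1=r((1+t/r)-1)$, hence generically a \emph{sum} of powers of $1+t/r$) by the fixed factor $\frac{k}{r}(1+t/r)^{2k-2}$. Polynomiality of $P$ only gives a divisibility constraint, not a monomial answer. Without the claim, the implication ``$c_m\neq0$ forces exponent $\ge 0$'' and the elimination of $k=1$ (which needs $c_m=0\Rightarrow\frac{\de^mP}{\de x_2^m}(t,0)\equiv0$) both collapse. You also pass silently over why the exponent $k$ on the axis $x_1=0$ equals the exponent on $x_2=0$; this needs a comparison of mixed partials at the origin.

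The paper avoids any such global structural statement: after pinning down $P(x_1,0)$, $P(0,x_2)$, $\frac{\de P}{\de x_2}(x_1,0)$, $\frac{\de P}{\de x_1}(0,x_2)$ (and deducing $k_1=k_2$ and $r_2=\e\sigma k_1^2/r_1$ from mixed partials at the origin), it writes $P$ in the general form \eqref{soln2} with an arbitrary remainder $x_1^2x_2^2\,\eta(x_1,x_2)$, substitutes into \eqref{MAn2}, and extracts the single scalar condition obtained by applying $\frac{\de^2}{\de x_1\de x_2}$ and evaluating at the origin. This yields the Diophantine equations $k_1^2-5k_1+6=0$ and $3k_1^2-k_1+6=0$, whence $k_1\in\{2,3\}$ — a finite-jet computation at one point, with no need to control the entire Taylor expansion along the axis. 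To repair your proof you would either have to carry out the induction establishing the pure-power form (which is a substantial computation and may well require the very information you are trying to derive), or replace that step by a low-order jet argument of the paper's type.
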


\begin{proof}
Let $P$ be  a solution, whose  constant term is equal to $\pm1$, of  \eqref{MAn2}. By Lemma \ref{pt}, we have that
 \begin{equation}\label{restrizioni}
P(x_1,0)=\e\left(1+\frac{x_1}{r_1} \right)^{k_1},\qquad  P(0, x_2)=\e\left(1+\frac{x_2}{r_2} \right)^{k_2}, \end{equation}
 for suitable $k_1$, $k_2\in\Z^+$ and $r_1,\ r_2\in\R$.  Moreover, by  \eqref{q},
 \begin{equation}\label{derivate}
\frac{\partial P}{\partial x_2}(x_1,0)=\pm\frac{r_1}{k_1} \left( 1+\frac{x_1}{r_1}\right)^{2},\qquad \frac{\partial P}{\partial x_1}(0,x_2)=\pm\frac{r_2}{k_2} \left( 1+\frac{x_2}{r_2}\right)^2.\end{equation}
From the comparison of the derivative w.r.t. $x_1$ of the first equality of \eqref{derivate} and the  derivative w.r.t. $x_2$ of the second equality of \eqref{derivate}, in particular by considering their evaluation at $(0,0)$, we obtain
$$k_1=k_2$$
and 
\begin{equation}\label{derivate2}
\frac{\partial P}{\partial x_2}(x_1,0)=\sigma\frac{r_1}{k_1} \left( 1+\frac{x_1}{r_1}\right)^{2},\qquad \frac{\partial P}{\partial x_1}(0,x_2)=\sigma\frac{r_2}{k_1} \left( 1+\frac{x_2}{r_2}\right)^2\end{equation}
where $\sigma=\pm1$.
Moreover, from the comparison of the second equality of \eqref{derivate2} and the  derivative w.r.t. $x_1$ of the first equality of \eqref{restrizioni}, in particular by considering their evaluation at $(0,0)$, we get 
$$r_2=\e\sigma\frac{k_1^2}{r_1}.$$
Therefore, the polynomial $P$ can be written as:
\begin{multline}\label{soln2}
\e \left(1+\frac{x_1}{r_1} \right)^{k_1}+\e\left(1+\e\sigma\frac{r_1 x_2}{k_1^2} \right)^{k_1}-\e
+\e\frac{k_1 }{r_1} \left( 1+\e\sigma\frac{r_1x_2 }{k_1^2}\right)^2x_1+\sigma \frac{r_1 }{k_1} \left( 1+\frac{x_1}{r_1}\right)^{2}x_2\\
-\e\frac{k_1}{r_1}  x_1-\sigma\frac{r_1}{k_1} x_2-\sigma\frac{2}{k_1}x_1x_2+x_1^2x_2^2\ \eta(x_1,x_2),
\end{multline}
where $\eta$ is a polynomial.

By putting \eqref{soln2} in  \eqref{MAn2}, by differentiating both sides of the equation by $\frac{\partial^2}{\partial x_1\partial x_2}$ and by evaluating at $(0,0)$,
we straightforwardly get the following Diophantine equations:
$$k_1^2-5k_1+6=0$$
and
$$3k_1^2-k_1+6=0.$$
By solving the first equation,  we obtain
$$k_1=2,\quad \text{or}\quad k_1=3,$$
while the second equation does not admit any real solution.
Hence, by considering \eqref{restrizioni} and \eqref{derivate2}, we get our statement.
\end{proof}

Since each solution \eqref{solution1}-\eqref{solution2} satisfies the correspondent initial condition  \eqref{Cauchy}, we   conclude the proof of Proposition \ref{newmain} by proving the following lemma.
\begin{lem}
If there exists a polynomial solution of type  \eqref{eq.Px} to \eqref{MAn2}  satisfying one of the initial conditions \eqref{Cauchy}, then it is unique.
\end{lem}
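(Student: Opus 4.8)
The plan is to treat \eqref{MAn2} as an evolution equation in the variable $x_2$ and to show that the Cauchy data \eqref{Cauchy} on the axis $x_2=0$ determine, through a recursion, every Taylor coefficient in $x_2$ of a putative polynomial solution; uniqueness then follows by induction. Concretely, I would expand a hypothetical solution as $P(x_1,x_2)=\sum_{m\ge 0}p_m(x_1)\,x_2^m$, where $p_0(x_1)=P(x_1,0)$ and $p_1(x_1)=\frac{\de P}{\de x_2}(x_1,0)$ are prescribed by \eqref{Cauchy}, and then clear the denominator in \eqref{MAn2} to obtain the polynomial identity $\det M=\pm P^{3}$, where $M$ is the $2\times2$ matrix in the numerator of \eqref{MAn2}, with entries (writing $P_i,P_{ij}$ for the partial derivatives of $P$ with respect to $x_i,x_j$)
$$
M_{11}=(PP_{11}-P_1^2)x_1+PP_1,\quad M_{22}=(PP_{22}-P_2^2)x_2+PP_2,\quad M_{12}=(PP_{12}-P_1P_2)x_1,\quad M_{21}=(PP_{12}-P_1P_2)x_2.
$$

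The core is to expand $\det M=\pm P^{3}$ in powers of $x_2$ and isolate, in the coefficient of $x_2^m$, the highest-index unknown $p_{m+1}$. First I would observe that $M_{22}$ carries the second derivative $P_{22}$ only through the explicit factor $x_2$, so that the line $x_2=0$ is characteristic and Cauchy–Kovalevskaya does not apply directly. Nonetheless, the two surviving contributions to $p_{m+1}$, both coming from the diagonal product $M_{11}M_{22}$ (namely from $M_{11}\cdot PP_2$ and from $M_{11}\cdot PP_{22}\,x_2$), combine after a short computation into the single linear term $A_0\,p_0\,(m+1)^2\,p_{m+1}$, where $A_0(x_1):=(p_0p_0''-(p_0')^2)x_1+p_0p_0'$ is the value of $M_{11}$ on $x_2=0$ and the primes denote $x_1$-derivatives; the off-diagonal product $M_{12}M_{21}=x_1x_2(PP_{12}-P_1P_2)^2$ and all remaining terms involve only $p_0,\dots,p_m$, while the right-hand side $\pm P^3$ contributes at most $p_m$. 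Thus, for every $m\ge 1$, the coefficient of $x_2^m$ yields an equation of the form $A_0\,p_0\,(m+1)^2\,p_{m+1}=\big(\text{expression in }p_0,\dots,p_m\text{ and their }x_1\text{-derivatives}\big)$.

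To run the recursion I must check that the leading factor $A_0\,p_0$ is not the zero polynomial for the data \eqref{Cauchy}. Writing $p_0=\e(1+x_1/r)^{k}$ and $u=1+x_1/r$, a direct computation gives $p_0p_0''-(p_0')^2=-k\,u^{2k-2}/r^2$ and $p_0p_0'=k\,u^{2k-1}/r$; substituting $x_1=r(u-1)$ collapses $A_0$ to $A_0=k\,u^{2k-2}/r$, which is nonzero since $k\in\{2,3\}$ and $r\neq 0$. Hence $A_0\,p_0\,(m+1)^2\not\equiv 0$, so the coefficient-of-$x_2^m$ equation determines $p_{m+1}$ uniquely from $p_0,\dots,p_m$. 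Starting from the prescribed $p_0,p_1$ and solving successively for $m=1,2,\dots$ produces a unique sequence $(p_m)$, whence any two polynomial solutions of \eqref{MAn2} sharing the Cauchy data \eqref{Cauchy} have identical Taylor coefficients in $x_2$ and therefore coincide. As a consistency check that the recursion is correctly initialized, the coefficient of $x_2^{0}$ reduces to $A_0\,p_1=\pm p_0^{2}$, which the data \eqref{Cauchy} indeed satisfy.

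I expect the main obstacle to be precisely the characteristic nature of the initial line $x_2=0$: because the coefficient of the top derivative $P_{22}$ vanishes there, one cannot argue by Cauchy–Kovalevskaya, and the crux is the bookkeeping showing that the two $p_{m+1}$-contributions do not cancel but add up to $A_0\,p_0\,(m+1)^{2}$, together with the verification that this factor is a nonzero polynomial for the admissible data.
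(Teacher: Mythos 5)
Your proof is correct and follows essentially the same route as the paper: both isolate, in the coefficient of $x_2^m$ (equivalently $\partial^m/\partial x_2^m$ evaluated at $x_2=0$) of the equation, the unique linear occurrence of the next Taylor coefficient, with multiplier a nonzero multiple of $A_0=\bigl(p_0p_0''-(p_0')^2\bigr)x_1+p_0p_0'=\frac{k}{r}\left(1+\frac{x_1}{r}\right)^{2k-2}\not\equiv 0$, and then recurse to determine the whole expansion along $x_2=0$. The only difference is cosmetic: you clear the denominator and work with $\det M=\pm P^3$ and the coefficients $p_m(x_1)$, whereas the paper keeps the quotient $F=\det M/P\mp P^2$ and phrases the same recursion through the formula for $\partial^h F/\partial x_2^h(x_1,0)$.
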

\begin{proof}
Let $F$ be a function whose zero defines the PDE \eqref{MAn2}, i.e.,  
$$F:=\frac{\det\left[\left(P \frac{\partial^2 P}{\partial{x_\alpha}\partial{x_\beta}}-\frac{\partial P}{\partial{x_\alpha}}\frac{\partial P}{\partial{x_\beta}}\right)x_\alpha+P\frac{\partial P}{\partial{x_\alpha}} \delta_{\alpha\beta}\right]_{1\leq\alpha,\beta\leq 2}}{P}\mp P^{2}.$$
Then,
from a straightforward computation, we  get the following formula:
\begin{equation}\label{unique}
\frac{\partial^h F}{\partial x_2^h} (x_1,0)= \left( (h+1)\left( P\frac{\partial^2 P}{\partial x_1^2}x_1-\left(\frac{\partial P}{\partial x_1}\right)^2x_1+P\frac{\partial P}{\partial x_1}\right)\frac{\partial^{h+1} P}{\partial x_2^{h+1}} + T^h\right)(x_1,0)\,,
\end{equation}
where $T^h(x_1,0)$  is a polynomial expression in $x_1$, $P(x_1,0)$ and derivatives of $P$ up to order $h+2$ (computed in $(x_1,0)$), that does not contain $\frac{\partial^{h+1} P}{\partial x_2^{h+1}}(x_1,0)$,  $\frac{\partial^{h+2} P}{\partial x_2^{h+2}}(x_1,0)$ or $\frac{\partial^{h+2} P}{\de x_1 \partial x_2^{h+1}}(x_1,0)$.
If $P$ is a  polynomial solution of type  \eqref{eq.Px}  to \eqref{MAn2} satisfying one of the initial conditions \eqref{Cauchy},  i.e., $P(x_1,0)=\e\left(1+\frac{x_1}{r}\right)^k$ for a suitable integer $k$, hence we have
$$\left( P\frac{\partial^2 P}{\partial x_1^2}x_1-\left(\frac{\partial P}{\partial x_1}\right)^2x_1+P\frac{\partial P}{\partial x_1}\right)(x_1,0)=\frac{k}{r} \left( \frac{x_1}{r}+1\right)^{2k-2}\not\equiv0.$$
By considering formula \eqref{unique} when $h=1$, we realize that  initial conditions \eqref{Cauchy} uniquely determine $\frac{\partial^{2} P}{\partial x_2^{2}} (x_1,0)$, from which one obtains $\frac{\partial^{2+h} P}{\partial x_1^h\partial x_2^{2}} (x_1,0)$ for every $h\in \N$. By iteration, we get  the whole Taylor expansion of $P$ on the line $x_2=0$. Therefore, we get the statement of the lemma.
\end{proof}

By taking into account suitable para-holomorphic change of coordinates and in view of the Refinement  of Problem \ref{toricprob} at page \pageref{Refi}
we have to take into account only the powers of the polynomials of Proposition \ref{newmain} with $\e=1$:
$$\left( 1+\frac{\xi_1\eta_1}{2}\right)^K\left( 1+ \frac{\xi_2\eta_2}{2}\right)^K\,,\qquad \left( 1+ \frac{\xi_1\eta_1}{3} + \frac{\xi_2\eta_2}{3}\right)^K\,,$$
for any $K\in\Z^+$.
By considering \eqref{eq.x.xi.eta} and \eqref{diastind_b}, we can associate to the aforementioned polynomials the following two families of diastasis functions:
$$K\log\left[\left( 1+\frac{\xi_1\eta_1}{2}\right)\left( 1+ \frac{\xi_2\eta_2}{2}\right)\right]\qquad\text{and}\qquad K\log\left( 1+ \frac{\xi_1\eta_1}{3} + \frac{\xi_2\eta_2}{3}\right).$$
Note that the metric we obtain via formula \eqref{eq:metric.from.pot} from the first family is a metric on $\DP^1\times \DP^1$, whereas the second family gives a metric on $\DP^2$ (see the end of Section \ref{sec.pk.diastasis}). 

\medskip\noindent
Theorem \ref{th.main.2} when $n=2$ is thus proved. 

\subsubsection{\textbf{Case $n\geq 3$}}\label{sec.n3} 

\

\noindent
In this section we shall show how to prove the first part of Theorem \ref{th.main.2}. We shall discuss only the case $n=3$ as the multi-dimensional one is a straightforward generalization of it. It will be enough to consider, taking into account \eqref{eq.x.xi.eta}, only the following polynomial solutions to  equation \eqref{MA*}:
\begin{multline}
\left( 1+\frac{\xi_1\eta_1}{2}\right)^2\left( 1+ \frac{\xi_2\eta_2}{2}\right)^2\left( 1+ \frac{\xi_3\eta_3}{2}\right)^2\,,\quad \left( 1+\frac{\xi_1\eta_1}{2}\right)^2 \left( 1+ \frac{\xi_2\eta_2}{3} + \frac{\xi_3\eta_3}{3}\right)^3\,, 
\\
\left( 1+\frac{\xi_1\eta_1}{4}+ \frac{\xi_2\eta_2}{4} + \frac{\xi_3\eta_3}{4}\right)^4\,.
\end{multline}
In view of \eqref{diastind_b} and of  the Refinement  of Problem \ref{toricprob} at page \pageref{Refi}, the above polynomials lead to diastasis functions (in particular, potentials) that, via formula \eqref{eq:metric.from.pot}, give the \pk metrics we are looking for. More precisely, the first polynomial leads to the metric on $\DP^1\times\DP^1\times\DP^1$, the second one to metric on $\DP^1\times\DP^2$ and the third one to metric on $\DP^3$.

\small{}


\begin{thebibliography}{99}
\bibitem{amt}  D. V.   Alekseevsky, C. Medori, A. Tomassini, 
\emph{Homogeneous para-Kähler Einstein manifolds}. Russ. Math. Surv. 64  (2009) 1-43.
%
%

%
 

\bibitem{loitoric} C. Arezzo, A. Loi, F. Zuddas, \emph{On homothetic balanced metrics}, Ann Glob Anal Geom (2012) \textbf{41} 473-491.




\bibitem{bb} R. J. Berman and B. Berndtsson, \emph{Real Monge-Ampère equations and Kaehler-Ricci solitons on toric log Fano varieties}, Ann. Fac. Sci. Toulouse Math. \textbf{6} 22 (2013), no. 4, 649–711.



\bibitem{Cal} E. Calabi,  \emph{Isometric Imbedding of Complex Manifolds}, Ann. of Math. \textbf{58} (1953), no. 1, 1--23.




%
\bibitem{gm}  P. M. Gadea, A. Montesinos Amilibia,
\emph{Spaces of constant para-holomorphic sectional curvature}. Pacific J. Math. 136 (1989) 85-101.
 
\bibitem{hl}  F. R. Harvey, H. B. Lawson, 
\emph{Split special Lagrangian geometry} in Metric and differential geometry.
Progr. Math. 297 (2012)  43-89.













\bibitem{extr} A. Loi, F. Salis, F. Zuddas, \emph{Extremal \K metrics induced by finite or infinite complex space forms}, J. Geom. Anal. \textbf{31} (2021), no. 8, 7842–7865.

 \bibitem{pacific}  A. Loi, F. Salis and F. Zuddas, \emph{K\"ahler-Ricci solitons induced by infinite dimensional complex space forms}, Pacific J. Math. \textbf{316} (2022), 183-205.





\bibitem{loizedda} A. Loi, M. Zedda \emph{K\"ahler immersions of \K manifolds into complex space forms}, Lecture notes of the Unione Matematica Italiana \textbf{23}, Springer (2018).




\bibitem{MSpK} G. Manno, F. Salis, \emph{Para-\K immersions in \pk space forms}, preprint, \url{http://arxiv.org/abs/2503.11582}.

\bibitem{ny} G. Manno, F. Salis, \emph{$2$-dimensional \KE metrics induced by finite dimensional complex
projective spaces}, New York J. Math. \textbf{28}  (2022) 420-432.

\bibitem{ms2} G. Manno, F. Salis, \emph{$\mathbb{T}^n$-invariant Kähler-Einstein manifolds immersed in
complex projective spaces}, preprint, arXiv: 2407.12685.









\bibitem{salis} F. Salis, {\em Projectively induced rotation invariant \K metrics}, Arch. Math. \textbf{109} (2017),  285--292.












\end{thebibliography}
\end{document}